\begin{document}

\numberwithin{equation}{section}

\theoremstyle{plain}
\newtheorem{thm}[subsection]{Theorem}
\newtheorem{lem}[subsection]{Lemma}
\newtheorem{cor}[subsection]{Corollary}
\newtheorem{prop}[subsection]{Proposition}
\newtheorem{obs}[subsection]{Observation}

\theoremstyle{definition}
\newtheorem{definition}[subsection]{Definition}

\theoremstyle{remark}
\newtheorem{remark}[subsection]{Remark}
\newtheorem{eg}[subsection]{Example}
\newtheorem{egs}[subsection]{Examples}

\newcommand{\R}{\mathbf{R}}
\newcommand{\US}{\mathbf{S}}

\def\cpo{\textsf{cpo}}  
\def\cx{\mathbf{c}}
\def\ax{\mathbf{a}}

\def\a{\alpha}
\def\b{\beta}
\def\g{\gamma}
\def\G{\Gamma}
\def\k{\kappa}
\def\w{\omega}
\def\th{\theta}
\def\z{\zeta}
\def\eps{\varepsilon}

\def\bd{\partial}
\def\i{\mathbf{i}}
\def\ov{\mathcal{O}}
\def\ovb{\underline{\ov}}
\def\tube{\mathcal{T}}
\def\Peps{P_{\tau,b}(\eps)}
\def\theps{\th_{\eps}}
\def\mag#1{\left|#1\right|}
\def\half{{\textstyle{\frac{1}{2}}}}

\def\C{\mathbf{C}}
\def\O{\mathbf{0}}
\def\teps{t_{\eps}}

\def\ds{\displaystyle}

\hyphenation{para-metrize para-metriz-ation repara-metrize repara-met-riz-ation}

\long\def\symbolfootnote[#1]#2{
	\begingroup
	\def\thefootnote{\fnsymbol{footnote}}\footnote[#1]{#2}
	\endgroup
}
	
\parskip 5pt
\parindent 0pt
\baselineskip 16pt


\author{Bruce Solomon}
\address{Math Department, Indiana University, Bloomington IN 47405}
\email{solomon@indiana.edu}
\urladdr{mypage.iu.edu/$\sim$solomon}

\keywords{Quadric surface, oval, central symmetry, skewloop}

\begin{abstract}
Say that a surface in $\,S\subset\R^{3}\,$ has the \emph{central plane oval property, 
or \cpo}, if 
\medskip

\begin{itemize}
	\item
	$S\,$ meets some affine plane transversally along an oval, and 
	\smallskip
		
	\item
	Every such transverse plane oval on $\,S\,$ has central symmetry. 
\end{itemize}
\smallskip

\noindent
We show that a complete, connected $\,C^{2}\,$ surface with \cpo\ must either be 
a cylinder over a central oval, or else \emph{quadric}.
\smallskip

\noindent
We apply this to deduce that a complete $\,C^{2}\,$ surface containing a 
transverse plane oval but no skewloop, must be cylindrical or quadric.
\end{abstract}

\title{surfaces with central convex cross-sections}
\maketitle

\section{Introduction and overview}\label{sec:intro}

Call a set in a euclidean space \emph{central} if it has symmetry with respect to 
reflection through a point---its \emph{center}. Call an embedded plane loop an 
\emph{oval} if its curvature never vanishes.

\begin{figure}[h]
	\begin{center}
		\includegraphics[height=2cm]{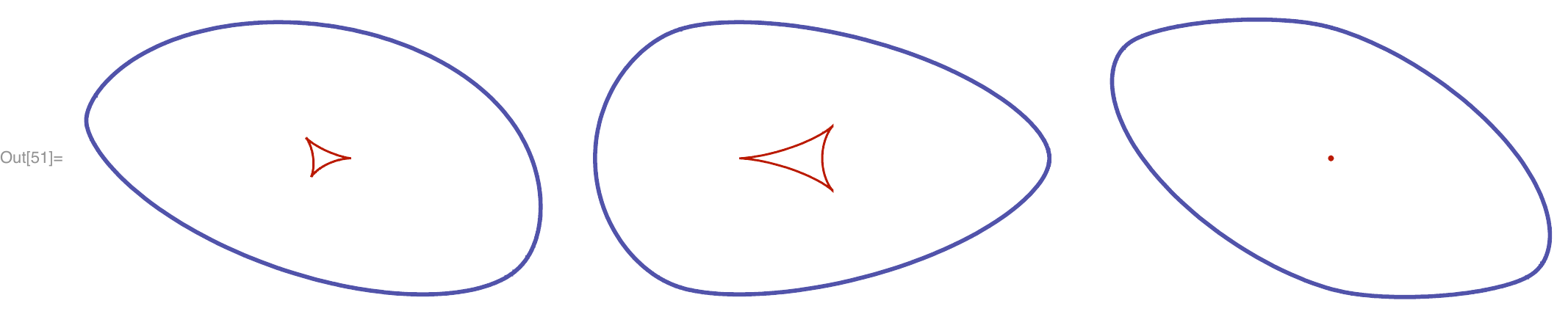}
		\caption{Ovals and their centrices (see \S\ref{ssec:ctx}). Only the rightmost oval is central.}
	\end{center}
\end{figure}
If we erect a cylinder over a central oval in $\,\R^{3}\,$, its transverse planar cross-sections,
whenever compact, will be central ovals too.

The same goes for \emph{quadrics}---level-sets of a quadratic polynomials on $\,\R^{3}\,$: Their transverse planar cross-sections, when compact, are always ellipses, 
which are certainly central ovals.

We show here that these two kinds of examples provide the \emph{only} complete 
$\,C^{2}\,$ surfaces in $\,\R^{3}\,$ whose planar ovals are all central.  
We will call this the \emph{central plane oval} property and abbreviate it 
by \emph{\cpo}:

\goodbreak
\begin{definition}[\textbf{\cpo}]\label{def:cpo}
	A $\,C^{2}$-immersed surface $\,S\subset\R^{3}\,$ has the \emph{central plane oval
	property}, or \emph{\cpo}, if
	
	\begin{itemize}
		
		\item
		$S\,$ intersects \emph{at least one} affine plane transversally along an 
		{oval}, and
		
		\item
		\emph{Every} time $\,S\,$ intersects an affine plane transversally along an oval,
		that oval is \emph{central}
	\end{itemize}

\end{definition}

Given this terminology, we can state our main result as follows:

\textbf{Theorem \ref{thm:main}} (Main Theorem)
	\emph{A complete, connected $\,C^{2}$-immersed surface in $\,\R^{3}\,$ with  \cpo\ is 
	either a cylinder, or quadric.}

This result complements a fundamentally local fact about \emph{convex} surfaces
proven long ago by W.~Blaschke in \cite{b}:

\begin{prop}[{\cite[1918]{b}}]\label{prop:blaschke}
	Suppose every plane transverse, and nearly tangent to, a smooth \emph{convex} surface 
	$\,S\subset\R^{3}\,$ cuts $\,S\,$ along a central loop. Then $\,S\,$ is quadric. 
\end{prop}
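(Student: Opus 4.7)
The plan is to extract from the hypothesis a pointwise third-order condition at every point of $\,S\,$, and then integrate it by a classical quadric characterization. Fix $\,p\in S\,$ with positive Gaussian curvature---this holds on a dense open subset of a smooth convex surface---and choose Cartesian coordinates with $\,T_{p}S\,$ the $\,(x,y)\,$-plane, $\,p\,$ the origin, and the inward normal along $\,+z\,$. Convexity presents $\,S\,$ near $\,p\,$ as the graph $\,z=f(x,y)\,$, and in principal axes
\[
f(x,y)=\tfrac{1}{2}\bigl(\kappa_{1}x^{2}+\kappa_{2}y^{2}\bigr)+C_{p}(x,y)+O(r^{4}),\qquad r^{2}=x^{2}+y^{2},
\]
with $\,\kappa_{1},\kappa_{2}>0\,$ and $\,C_{p}\,$ a homogeneous cubic. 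The goal of the local step is to force $\,C_{p}\equiv 0\,$.

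For each $\,\eps>0\,$ and $\,\nu=(\nu_{1},\nu_{2})\in\R^{2}\,$ small, the plane $\,\Pi_{\eps,\nu}:=\{z=\eps+\sqrt{\eps}\,(\nu_{1}x+\nu_{2}y)\}\,$ is transverse and nearly tangent to $\,S\,$, so by hypothesis $\,\Pi_{\eps,\nu}\cap S\,$ is a central oval. Rescaling by $\,x=\sqrt{\eps}\,u\,$, $\,y=\sqrt{\eps}\,v\,$ converts this intersection into a small perturbation of the ellipse
\[
\tfrac{1}{2}(\kappa_{1}u^{2}+\kappa_{2}v^{2})-\nu_{1}u-\nu_{2}v=1
\]
by $\,\sqrt{\eps}\,C_{p}(u,v)+O(\eps)\,$. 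Writing the oval's center in rescaled coordinates as the ellipse's center plus $\,\sqrt{\eps}\,(a,b)\,$ and demanding that the defining polynomial be reflection-invariant, up to a nonvanishing scalar factor, about this center produces at order $\,\sqrt{\eps}\,$ an identity of the form
\[
C_{p}(u,v)=L_{\nu}(u,v)\cdot\tfrac{1}{2}(\kappa_{1}u^{2}+\kappa_{2}v^{2}),
\]
where $\,L_{\nu}\,$ is a linear form whose coefficients depend polynomially on $\,\nu\,$ and on the unknown center-correction $\,(a,b)\,$. Since the left side is independent of $\,\nu\,$, matching coefficients of $\,\nu\,$ across the full nearly-tangent family forces $\,C_{p}\equiv 0\,$ and $\,(a,b)=(0,0)\,$.

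The pointwise vanishing of $\,C_{p}\,$ at every $\,p\,$ is a classical invariant characterization of quadrics: it is the vanishing of the Fubini--Pick cubic form of affine differential geometry, whose identical vanishing on a nondegenerate surface characterizes pieces of quadrics. Equivalently, the osculating quadric $\,Q_{p}\,$ matching $\,S\,$ to third order at $\,p\,$ is locally constant in $\,p\,$, and connectedness of $\,S\,$ yields $\,S\subset Q\,$ for a single quadric $\,Q\,$. The main obstacle is the local step: a single slice with $\,\nu=0\,$ only confines $\,C_{p}\,$ to the two-dimensional space of cubics divisible by the second fundamental form, and only by exploiting the $\,\nu\,$-dependence of the above identity across the whole nearly-tangent family does one eliminate every coefficient of $\,C_{p}\,$. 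Once this pointwise vanishing is established, the globalization to a quadric is standard.
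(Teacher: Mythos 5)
Your overall route---parabolic rescaling of nearly tangent sections plus a classical third-order characterization of quadrics---is the same one the paper ascribes to Blaschke (the paper itself only cites \cite{b} and sketches the argument), but the pivotal claim of your local step is false: the hypothesis cannot force $C_{p}\equiv 0$, because quadrics themselves violate it. Any quadric tangent to $z=0$ at the origin has the local form $z=Q_{2}(x,y)+\ell(x,y)\,Q_{2}(x,y)+O(r^{4})$ with $Q_{2}=\tfrac12(\kappa_{1}x^{2}+\kappa_{2}y^{2})$ and $\ell$ a linear form (e.g. $z=(x^{2}+y^{2})/(1-x)$, or an ellipsoid at any non-vertex point, where the cubic coefficient is governed by the derivative of curvature and is nonzero); yet all its compact plane sections are ellipses, hence central. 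Since every condition you extract is a necessary consequence of the hypothesis, your asserted conclusion ``$C_{p}\equiv0$ and $(a,b)=(0,0)$'' cannot follow, and indeed the bookkeeping does not close: the correct order-$\sqrt{\varepsilon}$ condition is that the odd part $C_{p}(w)+C_{p}(w-2c_{0})+2\nabla\widetilde{Q}_{\nu}(w)\cdot(a,b)$ vanish on the ellipse $\{\widetilde{Q}_{\nu}=0\}$, i.e. equal $L_{\nu}\cdot\widetilde{Q}_{\nu}$, and for $C_{p}=\ell\,Q_{2}$ this identity is solvable for \emph{every} $\nu$ by letting $L_{\nu}$ and the center correction $(a,b)$ depend on $\nu$. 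The tilted family therefore kills only the component of $C_{p}$ not divisible by $Q_{2}$; matching coefficients in $\nu$ does not eliminate the rest. Your closing paragraph compounds this by identifying ``$C_{p}\equiv0$'' with the vanishing of the Fubini--Pick form: the Pick form is the trace-free part of the cubic, and its vanishing means exactly $C_{p}=\ell\,Q_{2}$, not $C_{p}=0$.

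The repair is that the conclusion you dismiss as insufficient is in fact all that is needed, and it already comes from the untilted slices $\nu=0$: centrality of the rescaled loops forces $2C_{p}(w)+2\langle Aw,(a,b)\rangle$ to vanish on $\{Q_{2}=1\}$, hence $C_{p}$ is divisible by the second fundamental form, i.e. the Pick invariant vanishes at $p$; the Pick--Berwald theorem (vanishing Pick form on a nondegenerate surface implies quadric) then globalizes, exactly as in the paper's sketch ``this annihilates the Pick invariant''---the paper's phrase ``no cubic term'' must be read modulo multiples of the second fundamental form, i.e. in an affinely normalized expansion. Two smaller points: positive Gauss curvature need not hold on a dense open subset of a merely convex surface (it may contain flat or cylindrical pieces), so that reduction needs justification or the hypothesis should be read, as in Blaschke, for ovaloids/nondegenerate points; and ``the defining polynomial is reflection-invariant up to a scalar'' is stronger than centrality of its zero set---the condition you may legitimately impose at order $\sqrt{\varepsilon}$ is vanishing of the odd part on the unperturbed ellipse, which is precisely the divisibility statement above.
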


Though it resembles---and helped to inspire---our Main Theorem above, Blaschke's result 
seems much easier to prove, for the simple reason that convex surfaces lie on one side of their 
tangent planes. By pushing such a plane slightly into the surface, one always cuts it in a small convex 
loop. Blaschke merely observed that when all such loops are \emph{central}, one can Taylor-expand 
the surface as a graph over any tangent plane with no cubic term. This annihilates the Pick invariant 
on the surface, making it quadric.

Contrastingly, our Theorem allows some, or even all of the surface, to have negative Gauss
curvature. In a negatively curved region, one \emph{never} finds arbitrarily small 
planar ovals, and this totally blocks any direct generalization of Blaschke's argument---as 
he himself laments in \cite{b}.

We thus find it necessary to approach Theorem \ref{thm:main} using a global, multi-stage argument 
that ultimately rests on the rotationally symmetric case. We published the latter result in \cite{sor}:

\begin{prop}[\cite{sor}]\label{prop:sor}
	Let $\,M\,$ be a surface of revolution.
	If $\,M\,$ intersects every plane nearly perpendicular to its axis 
	in a central set, then $\,M\,$ is quadric.
\end{prop}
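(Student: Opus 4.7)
The plan is to reduce the problem to showing that $F(z):=r(z)^{2}$ is a quadratic polynomial in $z$, whereupon $\,M=\{x^{2}+y^{2}=F(z)\}\,$ is a quadric. First I would take the axis of revolution as the $z$-axis. By the rotational symmetry of the hypothesis, any plane nearly perpendicular to the axis may, after rotation about the axis, be normalized to $\,z=h+by\,$ with $\,h\in\R\,$ and $b$ small. Its intersection with $M$ is the planar curve $\{(x,y):x^{2}+y^{2}=F(h+by)\}$, which is invariant under the reflection $\,x\mapsto -x\,$. So any center of central symmetry must lie on $\,x=0\,$ in the plane, taking the form $(0,y_{c},z_{c})$ in $\,\R^{3}\,$ with $z_{c}:=h+by_{c}$.

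Next, central symmetry of the cross-section translates directly into the functional equation
\[
F(z_{c}+w)-F(z_{c}-w)\;=\;\lambda\, w \qquad\text{for $w$ in a neighborhood of $0$,}
\]
with $\,\lambda = 4y_{c}/b\,$ constant in $w$. Differentiating once at $w=0$ gives $\lambda=2F'(z_{c})$; differentiating again yields the local mirror-symmetry $\,F''(z_{c}+w)=F''(z_{c}-w)\,$. As $(h,b)$ ranges over admissible parameters, the centers $\,z_{c}=h+\half b^{2}F'(z_{c})\,$ fill out an open interval $I$ on the axis. Composing two mirror-symmetries of $F''$ at nearby centers $z_{c,1},z_{c,2}\in I$ yields invariance of $F''$ under the translation $\,z\mapsto z+2(z_{c,2}-z_{c,1})\,$; since the shift can be made arbitrarily small, continuity of $F''$ forces it to be locally, and hence globally, constant on $I$. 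Integrating twice, $F$ is a quadratic polynomial on the profile domain, making $M$ a quadric.

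The main obstacle, I expect, is the bookkeeping at the last step rather than the functional-equation manipulation: one must ensure that the $w$-intervals on which the local mirror-symmetries of $F''$ hold overlap widely enough to compose two such symmetries, and that the resulting translation-invariance propagates the conclusion $\,F''\equiv\text{const}\,$ across the entire profile domain. This is where the full strength of the hypothesis---central symmetry for \emph{every} nearly-perpendicular plane, not merely a single one-parameter family---enters decisively.
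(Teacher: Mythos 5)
Your outline is correct and is essentially the argument behind the quoted result: the present paper does not reprove Proposition \ref{prop:sor} (it defers to \cite{sor}), and the proof there rests on the same reduction you propose---central symmetry of the slightly tilted sections forces the odd part of $\,F=r^{2}\,$ about each center height to be linear in the offset, whence $\,F''\,$ is even about every point of an interval of heights, hence constant, making $\,F\,$ quadratic and $\,M\,$ quadric. The bookkeeping you flag is indeed routine---continuity of the center in $\,(h,b)\,$ (e.g.\ via the midpoint of the extreme $y$-values of the section) together with the intermediate value theorem shows the centers sweep out the interval with evenness windows of locally uniform width---though note that placing the center on the plane $\,x=0\,$ uses uniqueness of the center, available here because the relevant cross-sections are compact ovals, and that reflecting $\,F''\,$ about the midpoint of two nearby points $\,p,q\,$ gives $\,F''(p)=F''(q)\,$ directly, so the composition-of-two-reflections step can be dispensed with.
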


The fundamental problem we must solve to get from this basic result to our Main Theorem
boils down to the case of a general ``tube''. For suppose an immersed surface $\,M\,$ meets some 
plane transversally along an oval as our definition of \cpo\ requires. Then some neighborhood, in $\,M\,$,
of that oval embeds into $\,\R^{3}\,$ as a roughly cylindrical tube with \cpo.
Such tubes turn out to form the critical test case for our work. 
To explain further, we need some precise language. 

Let $\,I:=(-1,1)\,$ denote the open unit interval.

\begin{definition}[\textbf{Transversely convex tube}]\label{def:tct}
	Suppose $\,X:\US^{1}\times I\to\R^{3}\,$ is an embedding of the form
	\[
		X(\th,z) := \bigl(\cx(z) + \g(z;\th),\,z\bigr)\,,
	\]
	where $\,\cx:I\to\R^{2}\,$ and $\,\g:I\times\US^{1}\to\R^{2}\,$ are $\,C^{2}\,$, 
	and for each fixed $\,z\in I\,$, the map
	$\,\g(z;\cdot):\US^{1}\to\R^{2}\,$ parametrizes a plane oval having its centroid at the origin.
	
	\noindent
	A \textbf{transversely convex tube} is any embedded annulus that, after an
	affine isomorphism, can be parametrized in this way. We call $\,\cx\,$ its  
	\textbf{central curve}. When studying a transversely convex tube, we lose no
	generality by assuming it to lie in the slab $\,|z|<1\,$ as parametrized above,
	and we will routinely do so without further comment.
		
	\noindent
	Discarding the central curve $\,\cx\,$ of a transversely convex tube $\,\tube\,$ in standard
	position, we get the \textbf{rectification} $\,\tube\,$, denoted $\,\tube^{*}\,$, and given
	by the image of 
	\[
		X^{*}(\th,z) :=\bigl(\g(z;\th),\,z\bigr)
	\]
	(Figure \ref{fig2}). Finally, we say that $\,\tube^{*}\,$ \textbf{splits} when
	\[
		\g(z;\th) = r(z)\,\g(\th)
	\]
	for some fixed oval $\,\g:\US^{1}\to\R^{2}\,$, and some positive scaling function 
	$\,r:I\to(0,\infty)\,$. Note that a split tube is a surface of revolution precisely when$\,\g\,$
	parametrizes an origin-centered circle.
	
	\begin{figure}[t]
		\begin{center}
			\includegraphics[height=4cm]{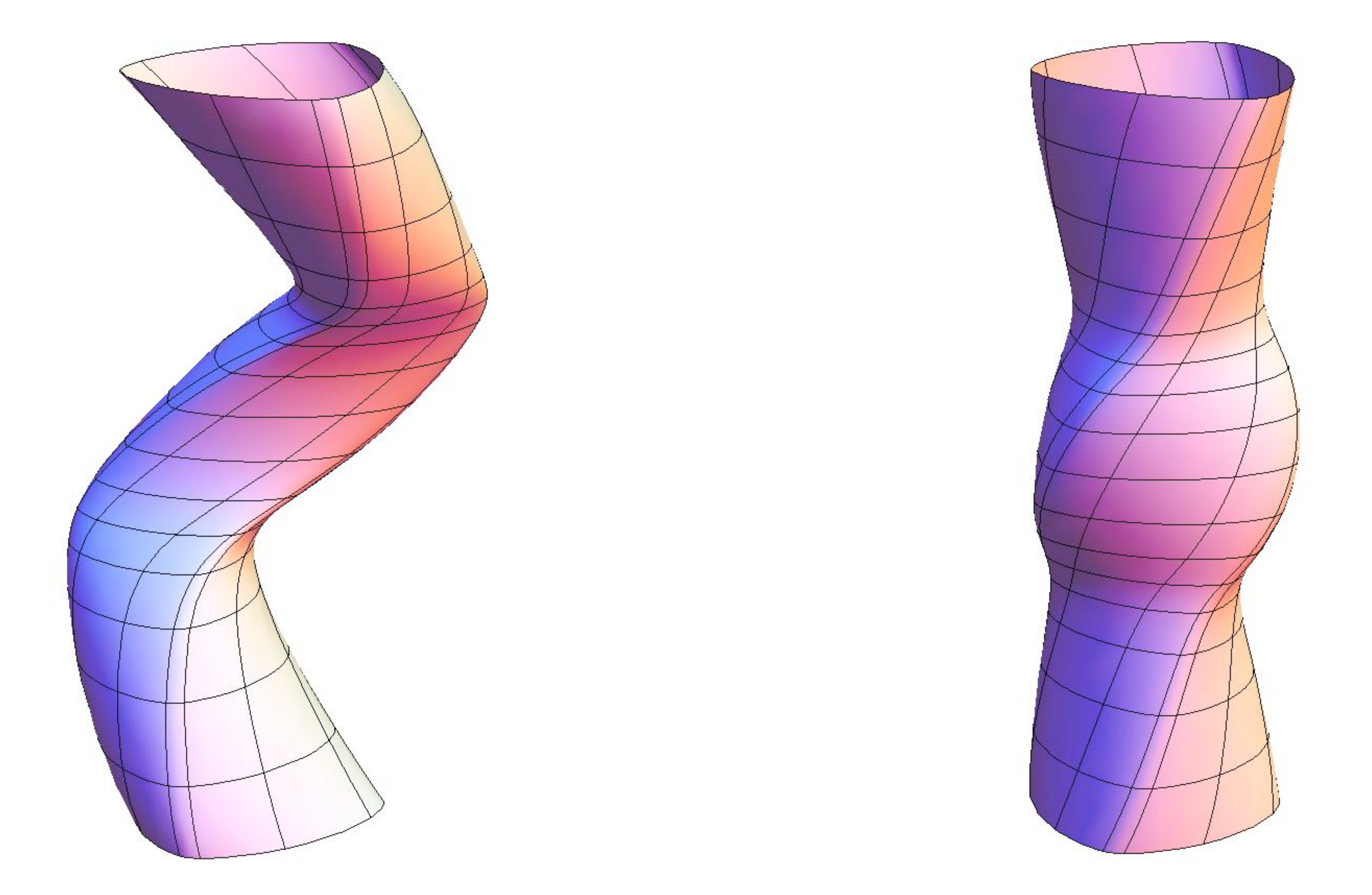}
		\caption{A transversely convex tube $\,\tube\,$ (left) and its rectification $\,\tube^{*}\,$ (right).}
		\label{fig2}
		\end{center}
	\end{figure}

\end{definition}

In these terms, we reach a key analytical juncture in our work when we prove the following 
technical result:

\textbf{Proposition \ref{lem:split}.} (Splitting Lemma)
	\emph{If a transversely convex tube $\,\tube\,$ in standard position has \cpo,
	then its rectification $\,\tube^{*}\,$ splits.}

Simple as this statement is, proving it was the most challenging part of our work. Much of 
the effort goes toward deriving a pair of partial differential equations satisfied by the function 
$\,h:\US^{1}\times I\to\R\,$ which, for each $\,z\in I\,$, yields the {support function} 
$\,h(z,\cdot)\,$ of the oval $\,\g(z;\cdot)\,$---the height-$z$ cross-section of the rectified tube 
$\,\tube^{*}\,$. These PDE's form the conclusion of Proposition \ref{prop:pdes}, and 
we devote most of \S\ref{sec:tubes} to their derivation. Our approach has a variational
flavor that we sketch out at the beginning of \S\ref{sec:tubes}.

We then obtain our Splitting Lemma by playing these PDE's off against each other.
Specifically, we use information gleaned from the second equation to rewrite the first
as an equation for the \emph{square} of $\,h\,$. We then notice a first integral for that
equation, and finally prove splitting with the help of ODE techniques in which the second
equation again plays a role.

Once we have Splitting, we return again to the first PDE from Proposition \ref{prop:pdes}, 
where we can now separate variables. This yields independent elementary ODE's for the 
horizontal and vertical behavior of our tube. Solving these, we reach the key geometric turning 
point of our work: We find that the possibilities for a tube with \cpo\ branch in two directions: 

\textbf{Proposition \ref{prop:branch}.} (Cylinder/Quadric)
	\textit{
	Suppose $\,\tube\,$ is a transversely convex tube with \cpo.
	Then its rectification $\,\tube^{*}\,$ is either
	\begin{itemize}
		\item[(i)] The cylinder over a central oval, or
		\smallskip
		\item[(ii)] Affinely congruent to a surface of revolution.
	\end{itemize}
	}

By Proposition \ref{prop:sor}, however, surfaces of revolution having \cpo\ are already quadric.
So we now see that, insofar as tubes go, it remains only to eliminate the rectification step. 
We do this in \S\ref{sec:axis} by proving

\textbf{Proposition \ref{prop:axis}.} (Axis lemma)
	\emph{Suppose $\,\tube\,$ is a transversally convex tube with \cpo. 
	Then its central curve is affine, 
	so that $\,\tube\,$ is affinely congruent to its rectification $\,\tube^{*}\,$.}

Together, the Cylinder/Quadric Proposition, Axis Lemma, and rotationally invariant case (Proposition 
\ref{prop:sor}) combine to show that a transversely convex tube with \cpo\ is either cylindrical 
or quadric. In other words, we have a ``tubular'' version of our Main Theorem:

\textbf{Proposition \ref{prop:collar}} (Collar Theorem)
	A transversely convex tube with \cpo\ is either cylindrical or quadric.

In \S\ref{sec:main}, we start with this fact, and show that it ``propagates,'' using an open/closed
argument, to any complete $\,C^{2}\,$ immersion with \cpo. This proves our Main Theorem 
\ref{thm:main}, and the argument is not difficult. For as we mentioned above, any surface $\,M\,$
with \cpo\ contains an annular subset that embeds in $\,\R^{3}\,$ as a transversely convex tube.
Our Collar Theorem now makes that tube either cylindrical or quadric. But the boundaries of such 
a tube, in either case, are again transverse central ovals. So they too have annular neighborhoods 
that embed as transversely convex tubes. Roughly speaking, this pushes the boundaries of the tube 
a little further out along $\,M\,$, and by completeness, the process terminates only when the tube 
engulfs all of $\,M\,$.

We conclude in \S\ref{sec:skew}, with an application that first motivated us toward the Main Theorem 
here: We extend the main result from our earlier paper with M.~Ghomi on skewloops \cite{gs}. 

A \emph{skewloop} is a smoothly immersed loop in $\,\R^{3}\,$ with no pair of distinct parallel tangent 
lines. In \cite{gs}, we showed that \emph{when a complete $C^{2}$-immersed surface in $\,\R^{3}\,$ 
has a point of positive curvature, it contains a skewloop if and only if it is not quadric.} 
We required the positive curvature 
assumption because our proof cited Proposition \ref{prop:blaschke} above (Blaschke's theorem) in an
essential way.  The Main Theorem here lets us bypass that result, eliminating the positive
curvature assumption in favor of one that holds for many surfaces with \emph{no} positive curvature:
the existence of a single transverse planar oval. We thus obtain

\textbf{Theorem \ref{thm:noskew}.}
	Suppose a $C^{2}$-immersed surface $\,M\subset\R^{3}\,$ crosses some plane transversally 
	along an oval. Then exactly one of the following holds:	
	\begin{itemize}
		
		\item[(i)] $S\,$ contains a skewloop.
		\smallskip
		
		\item[(ii)] $S\,$ is the cylinder over an oval.
		\smallskip
		
		\item[(iii)] $S\,$ is a non-cylindrical quadric.
	\end{itemize} 

For instance, this result characterizes the tube (i.e.~one-sheeted) hyperboloids as \emph{the only
negatively curved surfaces that contain a transverse plane oval, but no skewloop}.

We now proceed from the overview above to the details of our paper, starting with
some preliminary facts about ovals.

\goodbreak

\section{Oval and Centrix}\label{sec:ovals}

Recall that by an \emph{oval} in the plane, we mean an embedded, \emph{strictly} convex $\,C^{2}\,$ 
loop, and that a \emph{central oval} has central symmetry---symmetry with respect to reflection
through a point called its \emph{center}. 


\begin{definition}[Support parametrization/support function]\label{def:sptfn}
	A map $\,\g:\US^{1}\to\R^{2}\,$ \emph{support parametrizes} an oval $\,\ov\subset\R^{2}\,$ 
	if and only if it satisfies 
	\begin{equation}\label{eqn:spt}
		\g'(\th) = \left|\g'(\th)\right|\i\,e^{\i\th}
		\quad\text{for all $\,\th\in\R\,$}\,.
	\end{equation}
	Here we have identified $\,\C\approx\R^{2}\,$, and we regard
	$\,2\pi$-periodic maps $\,\R\to\R^{2}\,$ as maps from $\,\US^{1}\,$ to $\,\R^{2}\,$, 
	in the obvious ways. We use these identifications without 
	further comment below.
	
	Notice that (\ref{eqn:spt}) characterizes parametrization by the inverse of the 
	outer unit normal. This is a diffeomorphism $\,\ov\to\US^{1}\,$ on any $\,C^{2}\,$ oval $\,\ov$,
	a fact that yields both existence and uniqueness of the support parametrization.
	
	By an easy exercise, the \emph{support function} $\,h:\R\to\R\,$, given by 
	\begin{equation}\label{eqn:sptfn1}
		h(\th) := \sup_{p\in\ov}p\cdot e^{\i\th}\,,
	\end{equation}
	determines $\,\g\,$ via the formula
	\begin{equation}\label{eqn:sptfn2}
		\g(\th) = \left(h(\th) + \i\,h'(\th)\right)\,e^{\i\th}\,.
	\end{equation}
	%
\end{definition}

	Note that when we rotate an oval $\,\ov\,$ counterclockwise through an angle $\,\phi\,$ 
	about the origin, (\ref{eqn:sptfn1}) shifts its support function right by $\,\phi\,$: 
	\begin{equation}\label{eqn:hrot}
		h(\th)\mapsto h(\th-\phi)\,.
	\end{equation}
	Elementary calculations using (\ref{eqn:sptfn2}) further show that the
	support parametrization makes speed and curvature reciprocal to each other:
	\begin{equation}\label{eqn:speed}
		\left|\g'(\th)\right| = h(\th) + h''(\th)
		\qquad\text{and}\qquad
		\k(\th) = \frac{1}{h(\th)+h''(\th)}\,.
	\end{equation}
	In particular, strict convexity of an oval ensures that its support parametrization 
	\emph{immerses} the circle into $\,\R^{2}\,$.
	
	We eventually want to show that the cross-sectional ovals of a tube with \cpo\
	are circular up to affine isomorphism---ellipses. We will do so by invoking
	

\begin{obs}\label{prop:H'''}
	An oval is an origin-centered \textbf{ellipse} 
	if and only if its support function $\,h\,$ satisfies 
	\begin{equation*}
		\left(h^{2}\right)''' + 4\left(h^{2}\right)' = 0\,.
	\end{equation*}
\end{obs}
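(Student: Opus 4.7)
The plan is to integrate the third-order equation once, pass to a second-order linear ODE, and then classify all solutions using constant-coefficient theory.

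First I would observe that $(h^2)''' + 4(h^2)' = 0$ is the $\theta$-derivative of $(h^2)'' + 4h^2$, so it is equivalent to $(h^2)'' + 4h^2 = c$ for some real constant $c$. The general solution of this inhomogeneous harmonic oscillator is
\[
	h^2(\theta) = \alpha + \beta\cos 2\theta + \gamma\sin 2\theta\,,
\]
with $\alpha = c/4$. Writing $\beta\cos 2\theta + \gamma\sin 2\theta = \rho\cos 2(\theta-\phi)$ for some $\rho \geq 0$ and $\phi \in \R$, and invoking the rotation rule (\ref{eqn:hrot})---which shifts the support function by $\phi$ when we rotate the oval by $\phi$---I may assume $\phi = 0$, so that $h^2(\theta) = (\alpha + \rho)\cos^2\theta + (\alpha - \rho)\sin^2\theta$.

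For the ``only if'' direction, I would further rotate the given origin-centered ellipse into axis-aligned position. A direct computation from (\ref{eqn:sptfn1}), using the parametrization $(a\cos t, b\sin t)$, shows that the ellipse $\{x^2/a^2 + y^2/b^2 = 1\}$ has $h(\theta) = \sqrt{a^2\cos^2\theta + b^2\sin^2\theta}$. Hence $h^2$ takes the form $\alpha + \rho\cos 2\theta$, and differentiating three times verifies the ODE on the nose.

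For the converse, set $a^2 := \alpha+\rho$ and $b^2 := \alpha-\rho$; I need both to be positive real numbers. The degenerate case $\alpha = \rho$ would give $h = \pm\sqrt{2\alpha}\,|\cos\theta|$, which either fails to be $C^{2}$ at $\theta = \pi/2$, or, when taken smoothly as $\pm\sqrt{2\alpha}\cos\theta$, yields $h+h''\equiv 0$ and so violates strict convexity via (\ref{eqn:speed}). Thus $\alpha > \rho$, so $h^2 > 0$ on $\US^{1}$, and by continuity $h$ has constant sign on the connected circle; since a non-degenerate oval forces $h>0$ in the direction of any point of the oval lying strictly outside the origin-hemisphere, $h(\theta) = \sqrt{a^2\cos^2\theta + b^2\sin^2\theta}$ everywhere. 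The uniqueness clause of Definition \ref{def:sptfn} then identifies the oval with the axis-aligned origin-centered ellipse $\{x^2/a^2 + y^2/b^2 = 1\}$, and undoing the initial rotation completes the converse. The only slightly delicate step is pinning down the sign of $h$; the rest is a routine matching of the trigonometric expansion of $h^{2}$ with the squared support function of a coordinate-aligned ellipse.
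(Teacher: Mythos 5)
Your proof is correct and takes essentially the same route as the paper: both arguments reduce the statement to the fact that the squared support function of an origin-centered ellipse is a constant plus a frequency-two sinusoid, and that the admissible solutions of $(h^{2})'''+4(h^{2})'=0$ are exactly such functions. The only difference is bookkeeping---the paper writes the ellipse as $Ae^{\i t}$ for a symmetric invertible matrix and gets $h=|Ae^{\i\th}|$ directly, while you rotate to axis-aligned form and, in the converse, spell out the exclusion of the degenerate case $\alpha=\rho$ and the positivity of $h$, details the paper compresses into the phrase ``positive solutions with $|a|<c$.''
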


\begin{proof}
	We may parametrize any origin-centered ellipse by
	\[
		\a(t) = A\,e^{\i\,t}
	\]
	for some symmetric invertible matrix $\,A_{2\times2}\,$. In that case, 
	(\ref{eqn:sptfn1}) computes its support function as
	\[
		h(\th)
		=
		\sup_{t}\,A\,e^{\i\,t}\cdot e^{\i\th}\
		=
		\sup_{t}\,e^{\i\,t}\cdot Ae^{\i\th}\,.
	\]
	This supremum here clearly occurs when
	\[
		e^{\i\,t}=\frac{Ae^{\i\th}}{\left|Ae^{\i\th}\right|}\,,
	\]
	which instantly yields $\,h(\th) = \left|Ae^{\i\th}\right|\,$. Familiar trig identities
	then make it easy to deduce
	\begin{equation}\label{eqn:soe}
		h^{2}(\th) = a\cos(2\th + b) + c>0\,,
	\end{equation}
	for some constants $\,a,b\,$ and $\,c\,$, with $\,|a|<c\,$, and the positive solutions of 
	$\,f'''+4f'=0\,$ are precisely the functions given by (\ref{eqn:soe}).
	\end{proof}
	
Geometrically, (\ref{eqn:soe}) characterizes the support function of an ellipse with
major and minor axes  $\,\sqrt{c\pm a}\,$.


\subsection{The centrix.}\label{ssec:ctx}
We measure the failure of an oval to be centrally symmetric by examining the auxilliary 
curve that we call its \emph{centrix}:

\begin{definition}[Centrix]\label{def:centrix}
	Given an oval $\,\ov\subset\R^{2}\,$ and a unit vector 
	$\,e^{\i\th}\in\US^{1}\,$, there exist exactly two points on $\,\ov\,$
	with tangent lines perpendicular to $\,e^{\i\th}\,$. We call the line
	segment joining these two points the \emph{$\th$-diameter} of $\,\ov\,$.
	Denoting its midpoint by $\,\cx(\th)\,$, we then call the image of the resulting
	map $\,\cx:\US^{1}\to\R^{2}\,$ the \emph{centrix} of $\,\ov\,$. 
\end{definition}

\begin{figure}[h] 
	\centering
	\includegraphics[height=5cm]{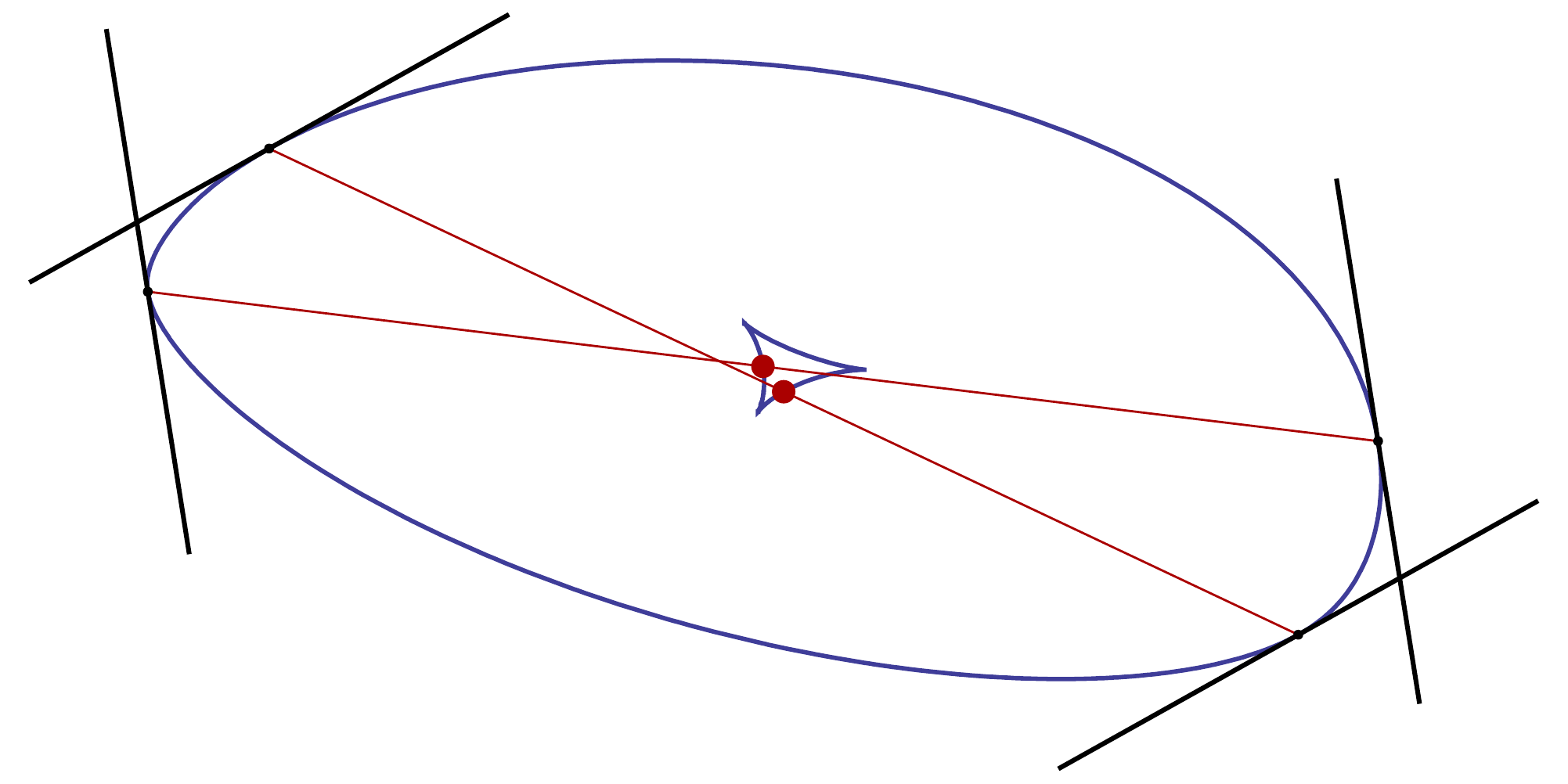}
	\caption{Midpoints of diameters trace out the centrix.}\label{fig:diameters}
\end{figure} 

\begin{definition}[Even/odd]\label{def:eo}
Given the support parametrization $\,\g\,$ of an oval $\,\ov\,$, we call the maps
\begin{equation*}
	\half\,\left(\g(\th) +\g(\th+\pi)\right)
	\quad\text{and}\quad
	\half\,\left(\g(\th)-\g(\th+\pi)\right)\,,
\end{equation*}
the \emph{even} and \emph{odd} parts of $\,\g\,$ respectively.
\end{definition}

\goodbreak
\begin{obs}\label{obs:centrix}	
	The centrix $\,\cx:\US^{1}\to\R^{2}\,$ of $\,\ov\,$ coincides with the even part of $\,\g\,$.
	It is a constant if and only if $\,\ov\,$ has central symmetry.
	In that case, the odd part of $\,\g\,$ support-parametrizes the origin-centered
	oval $\,\ov-\cx\,$.
\end{obs}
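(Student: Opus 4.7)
The plan is to reduce everything to the simple identification of the endpoints of the $\th$-diameter under the support parametrization. Since $\,\g\,$ is characterized by (\ref{eqn:spt}), the outward unit normal at $\,\g(\th)\,$ is exactly $\,e^{\i\th}\,$. An oval has a unique point for each outward normal direction, so the two points on $\,\ov\,$ whose tangent lines are perpendicular to $\,e^{\i\th}\,$ must be the points with outward normals $\,e^{\i\th}\,$ and $\,-e^{\i\th}=e^{\i(\th+\pi)}\,$. These are precisely $\,\g(\th)\,$ and $\,\g(\th+\pi)\,$. Their midpoint is then the even part of $\,\g\,$ by Definition \ref{def:eo}, proving the first assertion.

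For the second assertion, I would argue both directions directly from the identity $\,\cx(\th)=\half(\g(\th)+\g(\th+\pi))\,$. If $\,\cx\equiv p\,$ is constant, then $\,\g(\th+\pi) = 2p-\g(\th)\,$, so reflection through $\,p\,$ carries $\,\ov\,$ to itself, i.e.\ $\,\ov\,$ is central with center $\,p\,$. Conversely, if $\,\ov\,$ has center $\,p\,$, then for each $\,\th\,$ the reflection $\,2p-\g(\th)\,$ lies on $\,\ov\,$; a quick check shows its outward normal is the reverse of the outward normal at $\,\g(\th)\,$, i.e.\ $\,e^{\i(\th+\pi)}\,$. By uniqueness of the support parametrization, $\,2p-\g(\th)=\g(\th+\pi)\,$, and averaging gives $\,\cx(\th)\equiv p\,$.

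For the final assertion, assume $\,\cx\equiv p\,$. Then the odd part equals $\,\half(\g(\th)-\g(\th+\pi)) = \g(\th)-p\,$, a translate of the support parametrization of $\,\ov\,$. Since translation does not alter the tangent vector, (\ref{eqn:spt}) continues to hold, so this odd part support-parametrizes the translated oval $\,\ov-p=\ov-\cx\,$, which is centered at the origin.

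I do not anticipate a serious obstacle here: the whole statement is essentially bookkeeping, once one recognizes that the support parametrization labels boundary points by their outward normal direction. The only subtlety worth flagging in the write-up is the brief verification that point-reflection reverses the outward normal direction, which is what allows the identification $\,2p-\g(\th)=\g(\th+\pi)\,$ via uniqueness.
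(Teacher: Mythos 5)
Your proposal is correct and takes essentially the same approach as the paper: identify the endpoints of the $\th$-diameter as $\g(\th)$ and $\g(\th+\pi)$, conclude the centrix is the even part, and settle the symmetry equivalence and the final claim by the same reflection/translation bookkeeping. The only cosmetic difference is that you justify the endpoint swap under point-reflection via reversal of outward normals and uniqueness of the support parametrization, whereas the paper argues via preservation of parallel tangent lines and the fact that the reflection fixes only its center; the two verifications are interchangeable.
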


\begin{proof}
The defining condition for the support parametrization  (\ref{eqn:spt}) puts 
the endpoints of each $\th$-diameter on $\,\ov\,$ at $\,\g(\th)\,$ and 
$\,\g(\th+\pi)\,$. It follows immediately that the even part of $\,\g\,$ parametrizes
the centrix\nobreak $\,\cx\,$.

When $\,\cx(\th)\equiv \cx_{0}\in\R^{2}\,$, reflection through $\,\cx_{0}\,$ clearly
preserves $\,\ov\,$. 

Conversely, if reflection through some point $\,\cx_{0}\,$ preserves $\,\ov\,$, it---like any affine 
isomorphism---must preserve pairs of parallel lines. In particular, it will swap the endpoints of 
each $\th$-diameter, preserving their midpoints.
But reflection through $\,\cx_{0}\,$ preserves no other point. So central symmetry
means $\,\cx(\th)\equiv \cx_{0}\,$.

The even and odd parts of $\,\g\,$ always add back to $\,\g\,$.  So when $\,\ov\,$ is central, 
the odd part $\,\g^{*}\,$ clearly parametrizes $\,\ov-\cx\,$, whose center of 
symmetry obviously lies at the origin. In this case, we also have $\,(\g^{*})'(\th) = \g'(\th)\,$, 
a multiple of $\,\i\,e^{\i\th}\,$. It follows that (\ref{eqn:spt})
must hold for $\,\g^{*}\,$, which makes it a support parametrization.  
\end{proof}

\goodbreak

\section{Splitting}\label{sec:tubes}

In this section we tackle the technical key to our Main Theorem, establishing that
\cpo\ forces the support function of a transversely convex tube to split along purely horizontal 
and vertical factors. Our Splitting Lemma \ref{lem:split} states this precisely, and the
geometric consequence that makes it interesting, our Cylinder/Quadric Proposition
\ref{prop:branch}, then follows fairly easily.

To prepare for the Splitting Lemma, we need calculations that stretch over a number
of pages. We hope the following descriptive plan-of-attack will help the reader navigate
them with a clear sense of our intentions.

Our strategy is to focus on the families of ovals one gets by intersecting a transversely 
convex tube $\,\tube\,$ with planes tilted slightly away from the horizontal. Specifically,
given any $\,\eps\in\R\,$ and any unit-vector $\,\tau\in\US^{1}\,$, we consider the 
$\eps$-tilted plane given by
\begin{equation}\label{eqn:peps}
	\Peps
	:= 
	\left\{(p,z)\in\R^{2}\times\R:\ z = \eps\,(p\cdot\tau)+b\right\}.
\end{equation}
We call $\,\tau\,$ the \textbf{tilt-direction}, $\,b\,$ the $z$-intercept, and $\,\eps\,$ the 
\textbf{slope} of this plane. Fixing $\,\tau\in\US^{1}\,$ and $\,b\in(-1,1)\,$, we vary
the slope $\,\eps\,$ of this plane, and study the resulting intersections
with $\,\tube\,$  near $\,\eps=0\,$.

Since $\,\tube\,$ is transversely convex, it intersects \emph{horizontal} planes in 
$\,C^{2}\,$ ovals. By transversality, the cross-section $\,\Peps\cap\tube\,$
remains a $\,C^{2}\,$ oval for all sufficiently small $\,\eps\,$. When we assume that
$\,\tube\,$ has \cpo, these ovals all have central symmetry too. 

Our key idea is to study the \emph{centrices} of these cross-sections. The preservation of central symmetry makes them all singletons, by Observation \ref{obs:centrix}---they are independent
of the variable $\,\th\,$ along each oval. Differentiation with respect to $\,\th\,$ therefore yields 
a vanishing condition. By taking an initial $\eps$-derivative of this condition at 
$\,\eps=0\,$, we produce the two partial differential equations of Proposition \ref{prop:pdes}.
As explained in our introduction, these equations lead fairly directly to our Splitting Lemma.

We now work out the details of this program.

\subsection{The support map of $\,\tube\,$.}\label{ssec:Gamma}
As above, we let $\,\tube\,$ denote a transversely convex tube in standard position. 
By Definition \ref{def:tct} $\,\tube\,$ intersects the horizontal plane at any height
$\,b\in(-1,1)\,$ in an oval we shall call $\,\ov(b)\,$. Denote by $\,\nu:\tube\to\US^{1}\,$ 
the map that assigns to each point $\,p=(x,y,z)\in\tube\,$ the (horizontal) outer unit normal 
to $\,\ov(z)\,$ at $\,p\,$. Clearly, the map
\begin{equation*}
	\tube\to\US^{1}\times(-1,1)
	\quad\text{given by}\quad
	p\longmapsto \left(\nu(p),\ z(p)\right)\,.
\end{equation*}
is a diffeomorphism, whose inverse takes the form
\begin{equation}\label{eqn:dm}
	\left(e^{\i\th},\,z\right) \longmapsto \left(\G(\th,z),\,z\right)
\end{equation}
for some smooth map $\,\G:\US^{1}\times(-1,1)\to \R^{2}\,$. Indeed, 
$\,\G\,$ reparametrizes $\,\tube\,$, and for fixed $\,b\in(-1,1)\,$, it inverts the unit
normal map on $\,\ov(b)\,$. As mentioned following Definition \ref{def:sptfn}, this
means that $\,\G(\cdot,b)\,$ support-parametrizes $\,\ov(b)\,$, and for this reason,
we call it the \emph{support map} of the tube $\,\tube\,$.


\subsection{The height function $\z\,$}\label{ssec:implicit}
We now take an arbitrary intercept $\,-1<b<1\,$ and tilt direction $\,\tau\in\US^{1}\,$,
and regard them, for now, as fixed. 

Define the cross-section
\begin{equation*}
	\bar\ov(b,\eps):=\tube\cap\Peps\,,
\end{equation*}
and its image under the projection $\,(x,y,z)\stackrel{\pi}{\mapsto}(x,y)\,$,
\begin{equation*}
	\ov(b,\eps):= \pi\left(\bar\ov(b,\eps)\right)\,.
\end{equation*}
We abbreviate the \emph{horizontal} ($\eps=0$) cross-section by
\begin{equation*}
	\ov(b) := \bar\ov(b,0)\,,
\end{equation*}
and we will not hesitate to identify $\,\ov(b)\,$ with $\,\ov(b,0)\,$ too,
since the latter is clearly congruent to $\,\bar\ov(b,0)\,$.

As discussed above, the transverse convexity of $\,\tube\,$ ensures that  
$\,\bar\ov(b,\eps)\,$ is an oval for all sufficiently small $\,\eps\,$. When $\,\tube\,$
has \cpo, these tilted ovals will clearly have central symmetry as well, but we need 
not assume \cpo\ for our immediate goal here: We want to introduce and study the 
``height function'' $\,\z(\eps,\th)\,$ that lets us parametrize
$\,\ov(b,\eps)\,$ by the map (compare (\ref{eqn:dm}))
\begin{equation}\label{eqn:zeta0}
	\th\longmapsto \Bigl(\G\left(\th,\,\z(\eps,\th)\right),\ \z\left(\eps,\th\right)\Bigr)\,.
\end{equation}
The Implicit Function Theorem ensures the existence and $\,C^{2}\,$ smoothness
of $\,\z\,$. For suppose---informed by the characterization of $\,\Peps\,$ in 
(\ref{eqn:peps})---we define a map $\,G:\R\times\US^{1}\times(-1,1)\to\R\,$ via
\begin{equation}\label{eqn:G}
	G(\eps,\th,\z) := \z - b - \eps\,\tau\cdot\G(\th,\z)\,.
\end{equation}
Then $\,G\,$ inherits $\,C^{2}\,$ smoothness from $\,\G\,$, and
the pre-image of $\,\ov(b,\eps)\,$ in $\,\US^{1}\times(-1,1)\,$ under 
the parametrization of $\,\tube\,$ in (\ref{eqn:dm}) clearly solves
\begin{equation*}
	G(\eps, \th, \z) = 0\,.
\end{equation*}
On the \emph{horizontal} oval $\,\ov(b)\,$, we have $\,\z\equiv b\,$, so that trivially, 
\begin{equation*}
	G(0,\th,b)\equiv 0
	\quad\text{and}\quad
	\frac{\bd G}{\bd \z}\left(0,\theta,b\right)=1\ne 0
	\quad\text{for all $\,\theta\in\US^{1}\,$.}
\end{equation*}
The Implicit Function Theorem then provides a $\,\delta>0\,$, 
and a $\,C^{2}\,$ mapping $\,\z:(-\delta,\delta)\times\US^{1}\to\R\,$ that satisfies
\begin{equation}\label{eqn:zeta=z}
	\z(0,\th) \equiv b\quad\text{for all $\,\th\in\US^{1}\,$}\,,
\end{equation}
and
\begin{equation*}
	G\left(\eps,\th,\z(\eps,\theta)\right)\equiv 0
	\quad\text{for all $\,\theta\in\US^{1},\,|\eps|<\delta\,$.}
\end{equation*}

Written out using (\ref{eqn:G}), the latter equation becomes
\begin{equation}\label{eqn:zeta}
	\z(\eps, \th)
	=
	b+\eps\,\tau\cdot\G\left(\th,\,\z(\eps,\th)\right),
\end{equation}
which shows that, as hoped, (\ref{eqn:zeta0}) parametrizes $\,\ov(b,\eps)\,$. 

Now observe that the projection $\,(x,y,z)\stackrel{\pi}{\to}(x,y)\,$ induces an affine 
isomorphism $\,\Peps\approx\R^{2}\,$. Such maps preserve strict convexity, 
so that $\,\ov(b,\eps)\,$, and of course $\,\ov(b)\,$, are again ovals. 

For future reference, we note that affine isomorphisms also preserve central symmetry.
So when $\,\tube\,$ has \cpo, the projected oval $\,\ov(b,\eps)\,$ further inherits the
central symmetry that \cpo\ ascribes to $\,\bar\ov(b,\eps)\,$.

In any case, it will suffice henceforth to study the projected ovals $\,\ov(b,\eps)\,$ 
as it varies with $\,\eps\,$. In view of (\ref{eqn:zeta0}), we may clearly parametrize 
$\,\ov(b,\eps)\,$ by the immersion
\begin{equation}\label{eqn:G1}
	\th\longmapsto\G\left(\th,\,\z(\eps,\th)\right)\ .
\end{equation}

To analyze the initial variation of the centrix of $\,\ov(b,\eps)\,$, we will eventually 
requires following facts about the derivatives of $\,\z\,$. 
The reader will easiliy confirm them by differentiating 
(\ref{eqn:zeta}) implicitly, and using (\ref{eqn:zeta=z}):

\begin{obs}\label{obs:zeta'}
	We have
	\medskip
	%
	\begin{equation*}
		\frac{\bd\z}{\bd\eps}(0,\th)
		= 
		\tau\cdot
		\G\left(\th,\,b\right) 	
	\end{equation*}
	and
	\begin{equation*}
		\frac{\bd^{2}\z}{\bd\eps\,\bd\th}\left(0,\th\right)
		= 
		\tau\cdot \frac{\bd{\G}}{\bd\th}\left(\th,b\right)\ .
	\end{equation*}
\end{obs}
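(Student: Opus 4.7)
My plan is to derive both formulas by implicit differentiation of the defining relation
\[
    \z(\eps,\th) = b + \eps\,\tau\cdot\G\bigl(\th,\,\z(\eps,\th)\bigr),
\]
using $\,\z(0,\th)\equiv b\,$ as a boundary condition. This is the standard route suggested by the Implicit Function Theorem calculation, and since $\,\G\,$ is $\,C^{2}\,$ the computations are fully justified on $\,(-\delta,\delta)\times\US^{1}\,$.

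First I differentiate the identity in $\,\eps\,$, using the chain rule on the composition $\,\G(\th,\z(\eps,\th))\,$. This yields
\[
    \frac{\bd\z}{\bd\eps}(\eps,\th) = \tau\cdot\G\bigl(\th,\z(\eps,\th)\bigr) + \eps\,\tau\cdot\frac{\bd\G}{\bd\z}\bigl(\th,\z(\eps,\th)\bigr)\,\frac{\bd\z}{\bd\eps}(\eps,\th).
\]
Setting $\,\eps=0\,$ makes the second term drop out and replaces $\,\z(0,\th)\,$ by $\,b\,$, giving the first formula $\,\bd\z/\bd\eps(0,\th) = \tau\cdot\G(\th,b)\,$ directly.

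For the mixed partial derivative, I simply differentiate the \emph{already-evaluated} expression $\,\bd\z/\bd\eps(0,\th) = \tau\cdot\G(\th,b)\,$ with respect to $\,\th\,$. Since $\,b\,$ is now a fixed constant (not a function of $\,\th\,$), the right-hand side reduces to $\,\tau\cdot(\bd\G/\bd\th)(\th,b)\,$ with no cross-term involving $\,\bd\G/\bd\z\,$. A cleanliness check: one could instead first take the $\,\th\,$-derivative of the implicit relation and then the $\,\eps\,$-derivative, and by equality of mixed partials (valid because $\,\z\,$ is $\,C^{2}\,$) the two routes must agree; this reassures us that no $\,\bd\G/\bd\z\,$ contribution has been silently dropped.

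There is no real obstacle here---this is a direct application of implicit differentiation, and the pleasant feature is that evaluation at $\,\eps=0\,$ kills every term that would have made the formulas messier (specifically, all terms carrying a factor of $\,\eps\,$ or involving $\,\bd\G/\bd\z\,$). The result sets up the variational analysis of the centrix of $\,\ov(b,\eps)\,$ at $\,\eps=0\,$ that the following subsections will exploit.
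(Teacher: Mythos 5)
Your proposal is correct and follows exactly the route the paper intends: implicit differentiation of the relation (\ref{eqn:zeta}) in $\,\eps\,$, evaluation at $\,\eps=0\,$ via (\ref{eqn:zeta=z}), and then a $\,\th$-derivative of the resulting expression (with the mixed-partial order handled by the $\,C^{2}\,$ smoothness of $\,\z\,$). No gaps.
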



\subsection{The support-reparametrizing map $\,\theps\,$} 
Though (\ref{eqn:G1}) parame\-trizes $\,\ov(b,\eps)\,$, we 
want to study the \emph{centrix} of $\,\ov(b,\eps)\,$. Observation \ref{obs:centrix} 
offers a way to parametrize the centrix, but it derives from the \emph{support} parametrization 
of $\,\ov(b,\eps)\,$, not the one given by (\ref{eqn:G1}). The Proposition below details the
needed reparametrization, and its final conclusion yields a crucial input to our proof of the 
Splitting Lemma \ref{lem:split}.  Notation is as above.

\goodbreak
\begin{prop}\label{prop:theps}
	There exists a $\,\delta>0\,$ and a differentiable 1-para\-meter family of 
	diffeomorphisms
	\[
		\theps:\US^{1}\to\US^{1}\quad -\delta<\eps<\delta\,,
	\]
	such that the composition
	\[
		\G_{\eps}\circ \theps =  \G\left(\theps,\,\z(\eps,\theps)\right)
	\]
	support-parametrizes $\,\ov(b,\eps)\,$ for each $\,\eps\in(-\delta,\delta)\,$. The initial map
	$\,\th_{0}\,$ is the identity on $\,\US^{1}\,$, with initial $\eps$-derivative given by
	
	\[
		\frac{d\theps}{d\eps}\Big|_{\eps=0} 
		= 
		\left(\tau\cdot\i e^{\i\th}\right)\left(\frac{\bd\G}{\bd \z}(\th,b)\cdot e^{\i\th}\right)\,.
	\]
\end{prop}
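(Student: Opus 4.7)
The plan is to define $\theps$ implicitly as the reparametrization that aligns $e^{\i\phi}$ with the outer unit normal to $\ov(b,\eps)$ at its new parameter $\phi$, produce it via the Implicit Function Theorem, and compute its initial $\eps$-derivative by implicit differentiation, exploiting one critical cancellation that traces back to the fact that $\G(\cdot,z)$ already support-parametrizes every horizontal oval.

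To make this precise, write $\a_\eps(\th):=\G(\th,\z(\eps,\th))$ for the parametrization (\ref{eqn:G1}) of $\ov(b,\eps)$, and let $e^{\i N_\eps(\th)}$ denote the outer unit normal to $\a_\eps$ at parameter $\th$; equivalently, $\a_\eps'(\th)=\mag{\a_\eps'(\th)}\,\i\,e^{\i N_\eps(\th)}$. Because $\G(\cdot,b)$ support-parametrizes $\ov(b)$ (as noted after Definition \ref{def:sptfn}) and $\z(0,\th)\equiv b$, we have $\a_{0}=\G(\cdot,b)$ and hence $N_0=\mathrm{id}$. In particular $\bd_\th N_0\equiv 1\ne 0$, so the Implicit Function Theorem applied to the equation $N_\eps(\th)=\phi$ produces $\delta>0$ and a $C^1$ family of diffeomorphisms $\theps:\US^1\to\US^1$, $|\eps|<\delta$, with $\th_{0}=\mathrm{id}$, solving $N_\eps(\theps(\phi))\equiv\phi$. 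The composition $\a_\eps\circ\theps$ then has tangent parallel to $\i\,e^{\i\phi}$ at every $\phi$, hence support-parametrizes $\ov(b,\eps)$.

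To extract the derivative, I would differentiate $N_\eps(\theps(\phi))\equiv\phi$ in $\eps$ at $\eps=0$; combined with $N_0'\equiv 1$ and $\th_0=\mathrm{id}$, this yields
\begin{equation*}
	\frac{d\theps}{d\eps}\Big|_{\eps=0}(\phi) = -\frac{\bd N_\eps}{\bd\eps}\Big|_{\eps=0}(\phi).
\end{equation*}
To evaluate the right-hand side, differentiate the decomposition $\a_\eps'(\th)=r_\eps(\th)\,\i\,e^{\i N_\eps(\th)}$ (with $r_\eps(\th):=\mag{\a_\eps'(\th)}$) at $\eps=0$ and take the inner product with $e^{\i\th}$; since $N_0=\mathrm{id}$ this isolates the normal-angle component and gives
\begin{equation*}
	r_0(\th)\,\frac{\bd N_\eps}{\bd\eps}\Big|_{\eps=0}(\th) = -\Bigl(\frac{\bd\a_\eps'}{\bd\eps}\Big|_{\eps=0}(\th)\Bigr)\cdot e^{\i\th}.
\end{equation*}
A chain-rule expansion of $\a_\eps(\th)=\G(\th,\z(\eps,\th))$, using $\z(0,\th)\equiv b$ and hence $\z_\th(0,\th)\equiv 0$, then rewrites the right-hand factor as
\begin{equation*}
	\frac{\bd\a_\eps'}{\bd\eps}\Big|_{\eps=0}(\th) = \G_{\th\z}(\th,b)\,\z_\eps(0,\th) + \G_\z(\th,b)\,\z_{\eps\th}(0,\th).
\end{equation*}

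The key cancellation then eliminates the first term after pairing with $e^{\i\th}$: differentiating the identity $\G_\th(\th,z)\cdot e^{\i\th}\equiv 0$ (which records that $\G(\cdot,z)$ support-parametrizes $\ov(z)$ for \emph{every} $z$) in $z$ yields $\G_{\th\z}(\th,b)\cdot e^{\i\th}=0$, killing the would-be contribution of $\z_\eps(0,\th)=\tau\cdot\G(\th,b)$. In the surviving term, Observation \ref{obs:zeta'} supplies $\z_{\eps\th}(0,\th)=\tau\cdot\G_\th(\th,b)=r_0(\th)\,(\tau\cdot\i\,e^{\i\th})$; the overall factor of $r_0$ cancels against its counterpart on the left, and the final sign flip produces the stated expression. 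The main obstacle is spotting this cancellation---otherwise the answer retains an extraneous $\z_\eps(0,\th)$ term and the clean product form of the statement disappears.
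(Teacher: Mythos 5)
Your proposal is correct and follows essentially the same route as the paper: you define $\theps$ as the inverse of the outer-normal map of $\ov(b,\eps)$ and then implicitly differentiate, using Observation \ref{obs:zeta'} together with the cancellation $\G_{\th\z}\cdot e^{\i\th}=0$ that comes from $\G(\cdot,z)$ support-parametrizing every horizontal cross-section---exactly the two ingredients the paper uses. The only difference is bookkeeping: you track the normal-angle function $N_\eps$ and differentiate $N_\eps\circ\theps=\mathrm{id}$ and the polar form of $\a_\eps'$, whereas the paper dots the support condition with $e^{\i\th}$ to get its scalar relation (\ref{eqn:star}) and differentiates that, using $\G_{\th\th}\cdot e^{\i\th}=-\mag{\G_\th}$ in place of your $r_0$ factor.
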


\begin{proof}
The existence of $\,\theps\,$ is routine. For, $\,\G\left(\th,\z(\eps,\th)\right)\,$ parametrizes 
$\,\ov(b,\eps)\,$, and is $\,C^{2}\,$ in both $\,\th\,$ and $\,\eps\,$. This makes the unit outer normal 
$\,\nu_{\eps}(\th)\,$ on $\,\ov(b,\eps)\,$ continuously differentiable 
in both variables, while the strict convexity of $\,\ov(b,\eps)\,$ ensures that $\,\nu_{\eps}\,$ 
induces a diffeomorphism $\,\US^{1}\to\US^{1}\,$ that varies smoothly with $\,\eps\in(-\delta,\delta)\,$. 
By the Inverse Function Theorem, the inverse of this map varies smoothly in $\,\eps\,$ too. 
As noted after Definition \ref{def:sptfn}, however, the inverse of the outer normal on an oval gives its support parametrization. We therefore get the desired family of reparametrizing maps 
by setting $\,\theps:=(\nu_{\eps})^{-1}\,$ for each $\,|\eps|<\delta\,$.

Note too that by (\ref{eqn:zeta=z}), setting $\,\eps =0\,$ reduces $\,\G\left(\th,\z(\eps,\th)\right)\,$ 
to $\,\G(\th,b)\,$, which already support-parametrizes $\,\ov(b)\,$, by definition of $\,\G\,$. 
So $\,\th_{0}\,$ is the trivial reparametrization---the identity map---as claimed.

It remains to verify the stated formula for $\,\bd\theps/\bd\eps\,$ at $\,\eps=0\,$.
This requires some careful calculations.

Start by observing that since $\,\G_{\eps}\circ\theps\,$ support-parametrizes $\,\ov(b,\eps)\,$
when $\,|\eps|<\delta\,$. By (\ref{eqn:spt}), this makes its velocity at any input $\,\th\,$
a multiple of $\,\i e^{\i\th}\,$.
Hence
\begin{equation*}\label{eqn:star0}
	0\equiv e^{\i\th}\cdot \frac{\bd}{\bd\th}\left(\G_{\eps}\circ\theps\right)\,.
\end{equation*}
Use the chain rule to expand the derivative, abbreviating $\,\theps(\th)\,$ as simply
$\,\theps\,$, to rewrite this condition as
\begin{eqnarray*}
	0 &=&
	e^{\i\th}\cdot\frac{\bd}{\bd\th}\, \G\bigl(\theps,\, \z(\eps,\,\theps)\bigr)\\
	&&\\
	&=&
	e^{\i\th}\cdot
		\left[
			\frac{\bd\G}{\bd\th}\bigl(\theps,\z(\eps,\theps)\bigr) +
			\frac{\bd\G}{\bd\z}\bigl(\theps,\z(\eps,\theps)\bigr)
			\frac{\bd\z}{\bd\th}\left(\eps,\theps\right)
		\right]
		\frac{\bd\theps}{\bd \th}
	\end{eqnarray*}
Since $\,\theps\,$ is a diffeomorphism of $\,\US^{1}\,$, its derivative 
along the circle never vanishes. So we can divide out the final factor above
and conclude that {for all $\,|\eps|<\delta\,$, we have}

\begin{equation}\label{eqn:star}
	\frac{\bd\z}{\bd\th}\left(\eps,\theps\right)\,
	\frac{\bd\G}{\bd\z}\Bigl(\theps,\z\left(\eps,\theps\right)\Bigr)\cdot e^{\i\th}
	\ =\
	-\frac{\bd\G}{\bd\th}\Bigl(\theps,\z\left(\eps,\theps\right)\Bigr)\cdot e^{\i\th}\,.
\end{equation}
Regarding this as a characterization of $\,\theps\,$, we will differentiate implicitly with
respect to $\,\eps\,$, then set $\,\eps=0\,$ to verify the Proposition's final claim. 
To manage the task, we differentiate the two sides of (\ref{eqn:star}) 
separately before equating them to get our final conclusion.

\textit{Left side of (\ref{eqn:star}):} 
Differentiate the left-hand side of (\ref{eqn:star}). Because $\,\z(0,\th)\equiv b\,$, all pure 
$\th$-derivatives of $\,\z\,$ vanish at $\,\eps=0\,$, and we can rewrite the sole surviving summand using Observation \ref{obs:zeta'}:

\begin{eqnarray}\label{eqn:lhs'}
	\lefteqn{
		\frac{\bd}{\bd\eps}\Big|_{\eps=0}
		\left[
			\frac{\bd\z}{\bd\th}\left(\eps,\theps\right)\,
			\frac{\bd\G}{\bd\z}\Bigl(\theps,\z\left(\eps,\theps\right)\Bigr)\cdot e^{\i\th}
		\right]
	}\hspace{1.75in}\nonumber\\
	&&\nonumber\\
	&=&
	\frac{\bd^{2}\z}{\bd\th\,\bd\eps}\left(0,\th\right)\,
	\frac{\bd\G}{\bd\z}\left(\th,b\right)
	\cdot
	e^{\i\th}\\
	&&\nonumber\\
	&=&
	\left(
		\tau\cdot\frac{\bd\G}{\bd\th}\left(\th,b\right)
	\right)\,
	\left(
		\frac{\bd\G}{\bd\z}\left(\th,b\right)\cdot e^{\i\th}
	\right)\,.\nonumber
\end{eqnarray}
\vspace{5mm}

\textit{Right side of (\ref{eqn:star}):} 
Now differentiate the right side of (\ref{eqn:star}). Again, the constancy of $\,\z(\eps,\th)\,$ at 
$\,\eps=0\,$ eliminates most summands, so that

\begin{eqnarray}\label{eqn:rhs'}
	\lefteqn{
		\frac{\bd}{\bd\eps}\Big|_{\eps=0}
		\left[-
			\frac{\bd\G}{\bd\th}\Bigl(\theps,\,\z\left(\eps,\theps\right)\Bigr)\,\cdot e^{\i\th}
		\right] =
	}\hspace{0.4in}\nonumber\\
	&&\nonumber\\
	&&
	-\left(
		\frac{\bd^{2}\G}{\bd\th^{2}}\bigl(\th,b\bigr)\cdot e^{\i\th}
	\right)
	\frac{\bd\theps}{\bd\eps}\Big|_{\eps=0}
	\ -\
	\left(
		\frac{\bd^{2}\G}{\bd\th\,\bd\z}\bigl(\th,b\bigr)\cdot e^{\i\th}
	\right)
	\frac{\bd\z}{\bd \eps}\bigl(0,\th\bigr)\,.
\end{eqnarray}
\vspace{2mm}

We can now simplify this further, because $\,\G(\,\cdot, b)\,$ support-parametrizes $\,\ov(b)\,$.
This implies, via (\ref{eqn:spt}),  that at the preimage $\,(\th,b)\,$ of any point in that oval,
we have two identities:

\begin{equation*}
	\frac{\bd\G}{\bd\th}\cdot e^{\i\th}\equiv 0
	\quad\text{and}\quad
	\frac{\bd\G}{\bd\th}\cdot \i\,e^{\i\th}= \mag{\frac{\bd\G}{\bd\th}}\,.
\end{equation*}

The first of these lets us deduce

\begin{equation*}\label{eqn:thz}
	\frac{\bd^{2}\G}{\bd\th\,\bd\z}\cdot e^{\i\th}
	=
	\frac{\bd}{\bd\z}\,\left(\frac{\bd\G}{\bd\th}\cdot e^{\i\th}\right) 
	=
	0\,,
\end{equation*}
\vskip 3pt

which eliminates the final term on the right in (\ref{eqn:rhs'}).

Alternatively, if we differentiate the first of the two identities above with respect to $\,\th\,$, and  
then use the second, we get

\begin{equation}\label{eqn:th''}
	\frac{\bd^{2}\G}{\bd\th^{2}}\cdot e^{\i\th} 
	= 
	-\frac{\bd\G}{\bd\th}\cdot\i\,e^{\i\th}
	= 
	-\mag{\frac{\bd\G}{\bd\th}}\,.
\end{equation}

This lets us rewrite the first term on the right in (\ref{eqn:rhs'}), collapsing the whole
equation to

\begin{equation}\label{eqn:rhs'2}
	\frac{\bd}{\bd\eps}\Big|_{\eps=0}
		\left[-
			\frac{\bd\G}{\bd\th}\Bigl(\theps,\,\z\left(\eps,\theps\right)\Bigr)\,\cdot e^{\i\th}
		\right]
	=
	\mag{\frac{\bd\G}{\bd\th}}\ \frac{\bd\theps}{\bd\eps}\Big|_{\eps=0}\,.
\end{equation}
\bigskip

We now finish by setting (\ref{eqn:lhs'}) equal to (\ref{eqn:rhs'2}). This exhibits the initial 
$\eps$-derivative of equation (\ref{eqn:star}) as

\begin{equation*}
	\left(
		\frac{\bd\G}{\bd\th}\cdot\tau
	\right)\,
	\left(
		\frac{\bd\G}{\bd\z}\cdot e^{\i\th}
	\right)
	\ =\
	\mag{\frac{\bd\G}{\bd\th}}\ \frac{\bd\theps}{\bd\eps}\Big|_{\eps=0}\,.
\end{equation*}
\smallskip

Since this holds at the preimage $\,(\th,b)\,$ of any point in $\,\ov(b,\eps)\,$, and since, by
(\ref{eqn:spt}) again, $\,\bd\G/\bd\th\,$ normalizes to $\,\i\,e^{\i\th}\,$, this proves the
last conclusion of our Proposition.
\end{proof}


\subsection{The symmetry obstruction.} 
We shall write $\,\cx_{\eps}\,$ for  the centrix of $\,\ov(b,\eps)\,$. By Observation 
\ref{obs:centrix},  $\,\ov(b,\eps)\,$ is \emph{central} if and only if $\,\cx_{\eps}\,$ is \emph{constant},
or equivalently,
\begin{equation*}
	\frac{\bd}{\bd\th}\cx_{\eps}\equiv 0\,.
\end{equation*}

Now observe that when $\,\ov(b,\eps)\,$ has central symmetry {for all 
$\,\eps\,$ sufficiently near zero}---as it clearly does when $\,\tube\,$ has \cpo---we will 
also have
\begin{equation}\label{eqn:obstruct}
	\frac{\bd^{2}}{\bd\th\,\bd\eps}\Big|_{\eps=0}\cx_{\eps}\equiv 0\,.
\end{equation}

The initial mixed second partial of $\,\cx_{\eps}\,$ thus forms an \emph{obstruction} to \cpo.   

We want to show that conversely, the vanishing of this obstruction---independently 
of the tilt-direction $\,\tau\,$ and the height $\,b\,$ at which we compute it---has a strong
consequence. Indeed, this vanishing condition ultimately yields the partial differential equations 
of Proposition \ref{prop:pdes}, which in turn imply the Splitting Lemma \ref{lem:split}.

To get there, we first need to rewrite the vanishing condition (\ref{eqn:obstruct}) in terms of the 
support function of the horizontal oval $\,\ov(b)\,$. Toward that goal, we abbreviate
\begin{equation*}
	\theps:=\theps(\th)
	\quad\text{and}\quad
	{\bar\th}_{\eps}:=\theps(\th+\pi)
\end{equation*}
for each $\,\th\in\US^{1}\,$, then combine Observation \ref{obs:centrix} with Proposition \ref{prop:theps}
to get a formula for $\,\cx_{\eps}\,$:
\begin{equation}\label{eqn:cxe}
	\cx_{\eps}(\th)
	= 
	\frac{
		\G\left(\theps,\z(\eps,\theps)\right)
		+
		\G\left({\bar\th}_{\eps},\z(\eps,{\bar\th}_{\eps})\right)
	}{2}\,.
\end{equation}

In order to unpack (\ref{eqn:obstruct}), we must differentiate this formula twice: 
First with respect to $\eps\,$, and then with respect to $\,\th\,$. We record
the initial $\eps$-derivative as Lemma \ref{lem:dceps} below.

To prepare, 
let $\,\G^{*}(\,\cdot, z)\,$ denote the \emph{odd} part of $\,\G(\,\cdot,z)\,$ as specified by
Definition \ref{def:eo}, and let $\,\cx(z)\,$ denote the centroid of $\,\ov(z)\,$ for each 
$\,-1<z<1\,$.  In the language of Definition \ref{def:tct}, $\,\cx\,$ parametrizes the
\emph{central curve} of $\,\tube\,$, while $\,\G^{*}\,$ parametrizes its \emph{rectification}
$\,\tube^{*}\,$.

\begin{lem}\label{lem:dceps}
	Suppose the horizontal cross-section $\,\ov(z)\,$ of a transversely convex tube 
	$\,\tube\,$ is central about $\,(\cx(z),z)\,$ for each $\,-1<z<1\,$. 
	Then for any fixed tilt-direction 	$\,\tau\in\US^{1}\,$, we have

	\begin{eqnarray*}
			\frac{\bd\cx_{\eps}}{\bd\eps}\left(\th,z\right)\Big|_{\eps=0}
		&=&
		\left(\tau\cdot \i\,e^{\i\th}\right)\Bigl(\frac{\ \bd\G^{*}}{\bd\z}\cdot e^{\i\th}\Bigr)
		\frac{\ \bd\G^{*}}{\bd\th}\\
		&&\\
		&&\qquad \ + \  
		\bigl(\tau\cdot\cx(z)\bigr)\,\cx'(z)
		\ +\ 
		\bigl(\tau\cdot\G^{*}\bigr)\,\frac{\ \bd\G^{*}}{\bd\z}\,.
	\end{eqnarray*}

	We evaluate $\,\G^{*}\,$ and its derivatives here at $\,(\th,z)$ throughout.
\end{lem}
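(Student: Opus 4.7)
The plan is to differentiate the centrix formula (\ref{eqn:cxe}) directly with respect to $\,\eps\,$ at $\,\eps=0\,$, using the chain rule and the initial-derivative formulas already established in Observation \ref{obs:zeta'} and Proposition \ref{prop:theps}. Abbreviating $\,f(\eps):=\G\bigl(\theps(\th),\z(\eps,\theps(\th))\bigr)\,$ and $\,g(\eps):=\G\bigl(\theps(\th+\pi),\z(\eps,\theps(\th+\pi))\bigr)\,$, the formula (\ref{eqn:cxe}) reads $\,\cx_{\eps}=(f+g)/2\,$, so our task reduces to computing $\,f'(0)+g'(0)\,$.

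First I would unfold $\,f'(0)\,$ by the chain rule. Because $\,\z(0,\cdot)\equiv b\,$, all pure $\,\th$-derivatives of $\,\z\,$ vanish at $\,\eps=0\,$, so the total $\,\eps$-derivative of the inner $\,\z(\eps,\theps)\,$ at $\,\eps=0\,$ reduces to $\,\bd\z/\bd\eps(0,\th)=\tau\cdot\G(\th,b)\,$ by Observation \ref{obs:zeta'}. Combined with Proposition \ref{prop:theps}, this gives
\[
f'(0)=\frac{\bd\G}{\bd\th}(\th,b)\,\frac{\bd\theps}{\bd\eps}\Big|_{0}(\th)
 \;+\;\frac{\bd\G}{\bd\z}(\th,b)\,\bigl(\tau\cdot\G(\th,b)\bigr).
\]
The same calculation with $\,\th\,$ replaced by $\,\th+\pi\,$ yields $\,g'(0)\,$. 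Then I invoke the centrality hypothesis to decompose $\,\G(\th,z)=\cx(z)+\G^{*}(\th,z)\,$ with $\,\G^{*}(\th+\pi,z)=-\G^{*}(\th,z)\,$, which implies $\,\bd\G/\bd\th(\th+\pi,z)=-\bd\G^{*}/\bd\th(\th,z)\,$ and $\,\bd\G/\bd\z(\th+\pi,z)=\cx'(z)-\bd\G^{*}/\bd\z(\th,z)\,$. Likewise, since $\,e^{\i(\th+\pi)}=-e^{\i\th}\,$, the formula from Proposition \ref{prop:theps} gives $\,\bd\theps/\bd\eps|_{0}(\th+\pi)=(\tau\cdot\i e^{\i\th})\bigl((\cx'(b)-\bd\G^{*}/\bd\z)\cdot e^{\i\th}\bigr)\,$. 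Averaging $\,f'(0)\,$ and $\,g'(0)\,$ splits each product into an even and odd combination of $\,\cx\,$ and $\,\G^{*}\,$; after substituting the $\,\theps\,$-derivative formula and collecting, the mixed cross terms $\,\cx'\cdot(\tau\cdot\G^{*})\,$ and $\,\bd_{z}\G^{*}\cdot(\tau\cdot\cx)\,$ cancel between the two contributions, leaving precisely the three terms claimed by the Lemma.

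The main obstacle is the sheer bookkeeping: each of $\,f'(0)\,$ and $\,g'(0)\,$ expands into two products, one of which is itself bilinear in $\,(\cx'(b)+\bd_{z}\G^{*},\tau\cdot(\cx+\G^{*}))\,$ and similarly for the $\,\th+\pi\,$ side. The collapse to the stated three-term formula hinges on systematically using the odd/even structure of $\,\G\,$ coming from centrality, both in $\,\bd\G/\bd\th\,$ (which is already purely odd in $\,\th\,$) and in the two appearances of $\,\bd\theps/\bd\eps|_{0}\,$ at $\,\th\,$ versus $\,\th+\pi\,$. Once this is tracked carefully, the result follows by a single line of arithmetic; no further geometric input is needed beyond centrality of the horizontal cross-sections and the two initial-derivative formulas already in hand.
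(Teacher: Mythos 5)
Your proposal is correct and follows essentially the same route as the paper: chain-rule expansion of (\ref{eqn:cxe}) at $\,\eps=0\,$ using Observation \ref{obs:zeta'}, the decomposition $\,\G=\cx+\G^{*}\,$ with $\,\G^{*}\,$ odd, cancellation of the mixed $\,\cx\,$/$\,\G^{*}\,$ cross terms, and evaluation of $\,\bd\theps/\bd\eps|_{0}\,$ at both $\,\th\,$ and $\,\th+\pi\,$ via Proposition \ref{prop:theps}. The only cosmetic difference is that you substitute the two $\,\theps\,$-derivative formulas before averaging, whereas the paper keeps $\,\th_{0}',\bar\th_{0}'\,$ symbolic and identifies $\,(\th_{0}'-\bar\th_{0}')/2\,$ at the end; the computations are identical.
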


\begin{proof}
With (\ref{eqn:cxe}) in view, we first compute the initial $\eps$-derivative of 
$\,\G(\theps,\,\z(\eps,\theps))$. Recall that by Proposition \ref{prop:theps}, 
$\,\th_{0}(\th) = \th\,$, and abbreviate
\[	
	\th_{0}':=\frac{\bd\theps}{\bd\eps}\Big|_{\eps=0}\,.
\]

A routine application of the chain rule then gives

\begin{eqnarray}
	\lefteqn{\frac{\bd}{\bd\eps}\Big|_{\eps=0}\G\bigl(\theps,\,\z\left(\eps,\theps\right)\bigr)}
	\qquad\qquad\nonumber\\
	&=&
	\frac{\bd\G}{\bd\th}\left(\th,z\right)\th_{0}' 
		+ 
		\frac{\bd\G}{\bd\z}(\th,z)\Bigl(
			\frac{\bd{\z}}{\bd\eps}(0,\th) + \frac{\bd\z}{\bd\th}\left(0,\th\right)\th'_{0}
		\Bigr)\nonumber\\ 
	&&\label{eqn:Geps}\\
	&=&
	\frac{\bd\G}{\bd\th}\left(\th,z\right)\th_{0}' 
	+ 
	\frac{\bd\G}{\bd\z}(\th,z)\Bigl(\tau\cdot\G\left(\th,z\right)\Bigr)\,,
	\nonumber
\end{eqnarray}
where we have used equation (\ref{eqn:zeta=z}) 
and Observation \ref{obs:zeta'} to evaluate the derivatives of $\,\z\,$.

We must average (\ref{eqn:Geps}) over $\,\{\th,\bar\th\}\,$ to get the initial 
$\eps$-derivative of $\,\cx_{\eps}\,$ via (\ref{eqn:cxe}). We assume $\,\G(\th,z)\,$ 
support-parametrizes an oval $\,\ov(z)\,$ having central symmetry about $\,\cx(z)\,$ 
for each $\,-1<z<1\,$, so we have
\begin{equation*}
	\G(\th,z) = \cx(z) + \G^{*}(\th,z)
\end{equation*}
as in Observation \ref{obs:centrix}. Here $\,\G^{*}\,$ and all its $\th$-derivatives are \textbf{odd},
so that for instance
\begin{equation*}
	\G^{*}(\bar\th,z) = -\G^{*}(\th,z)\,.
\end{equation*}
All $\th$-derivatives of $\,\cx(z)\,$, on the other hand, clearly vanish. If we average (\ref{eqn:Geps}) over 
$\,\{\th,\bar\th\}\,$ with all these facts in mind, we get\smallskip
\begin{eqnarray*}
		\frac{\bd\cx_{\eps}}{\bd\eps}\left(\th,z\right)\Big|_{\eps=0}
	&=&
	\frac{1}{2}
	\left\{
		\frac{\bd\G}{\bd\th}\left(\th,z\right)\,\th_{0}' 
			+ 
		\frac{\bd\G}{\bd\z}\left(\th,z\right)\left(\tau\cdot\G\left(\th,z\right)\right)\right. \\
	&& \quad + \left.\quad
		\frac{\bd\G}{\bd\th}\left(\bar\th,z\right)\,\bar\th_{0}' 
			+ 
		\frac{\bd\G}{\bd\z}\left(\bar\th,z\right)\left(\tau\cdot\G\left(\bar\th,z\right)\right)
	\right\}\\
	&&\\
	&=&
	\frac{1}{2}
	\left\{
		\frac{\ \bd\G^{*}}{\bd\th}\,\th_{0}' 
			+ 
		\left(\cx'(z) +\frac{\ \bd\G^{*}}{\bd\z}\right)\Bigl(\tau\cdot\left(\cx(z)+\G^{*}\right)\Bigr)\right. \\
	&& \quad - \left.
		\frac{\ \bd\G^{*}}{\bd\th}\,\bar\th_{0}' 
			+ 
		\left(\cx'(z) -\frac{\ \bd\G^{*}}{\bd\z}\right)\Bigl(\tau\cdot\left(\cx(z)-\G^{*}\right)\Bigr)
	\right\},
\end{eqnarray*}

where we now evaluate $\,\G^{*}\,$ and its derivatives at $\,(\th,z)\,$ throughout. 
To simplify further, note that the four mixed products involving $\,\cx$ and 
$\,\G^{*}$-terms cancel in pairs, so that

\begin{eqnarray*}
	\lefteqn{
		\frac{\bd\cx_{\eps}}{\bd\eps}\left(\th,z\right)\Big|_{\eps=0}
	}\qquad\\
	&&\\
	&=&
	\left(\frac{\th_{0}'-\bar\th_{0}'}{2}\right)\frac{\ \bd\G^{*}}{\bd\th}
	+ 
	\bigl(\tau\cdot\cx(z)\bigr)\cx'(z)
	+
	\left(\tau\cdot\G^{*}\right)\frac{\ \bd\G^{*}}{\bd\z}	
\end{eqnarray*}

This will give the formula we seek---we just need to prove
\begin{equation}\label{eqn:diff}
	\frac{\th_{0}'-\bar\th_{0}'}{2} 
	= 
	\left(\tau\cdot \i\,e^{\i\th}\right)\Bigl(\frac{\ \bd\G^{*}}{\bd\z}\cdot e^{\i\th}\Bigr)\,.
\end{equation}
For that, we invoke Proposition \ref{prop:theps}.
Since $\,\G^{*}\,$ and $\,e^{\i\th}\,$ are both odd, that Proposition yields

\begin{eqnarray*}
	\th_{0}' 
	&=& 
	\left(\tau\cdot \i\,e^{\i\th}\right)\Bigl(\cx'(z)\cdot e^{\i\th} + \frac{\ \bd\G^{*}}{\bd\z}\cdot e^{\i\th}\Bigr)\\
	&&\\
	\bar\th_{0}'
	&=&
	\left(\tau\cdot \i\,e^{\i\th}\right)\Bigl(\cx'(z)\cdot e^{\i\th} - \frac{\ \bd\G^{*}}{\bd\z}\cdot e^{\i\th}\Bigr)\,.
\end{eqnarray*}

Subtract the second line from the first to get (\ref{eqn:diff}), and the desired formula follows. 
\end{proof}

To finish analyzing the vanishing condition (\ref{eqn:obstruct}), 
we next need to differentiate the result just proven with respect to $\th$. 
That seems to require a lengthy calculation, but if we work with respect 
to the frame $\,\{e^{\i\th},\,\i\,e^{\i\th}\}\,$, a simple observation eliminates the 
$\,e^{\i\th}$ term entirely.

\goodbreak
\begin{obs}\label{obs:ftau}
	Suppose the horizontal cross-section $\,\ov(z)\,$ of a transversely convex tube 
	$\,\tube\,$ is central about $\,(\cx(z),z)\,$ for each $\,-1<z<1\,$. 
	Then for each tilt-direction $\,\tau\in\US^{1}\,$, there exists a function 
	$\,f_{\tau}:\US^{1}\times(-1,1)\to\R\,$ such that
	\begin{equation*}
		\frac{\,\bd^{2}\cx_{\eps}}{\bd\eps\,\bd\th}\bigl(\th,z\bigr)\Big|_{\eps=0}
		=
		f_{\tau}(\th,z)\,\i\,e^{\i\th}
	\end{equation*}
	for all $\,(\th,z)\in\US^{1}\times(-1,1)\,$.

\end{obs}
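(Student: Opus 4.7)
The plan is to differentiate the formula from Lemma \ref{lem:dceps} with respect to $\,\th\,$ and show that its inner product with $\,e^{\i\th}\,$ vanishes identically. Since $\,\bd^{2}\cx_{\eps}/\bd\eps\,\bd\th\,$ is an $\,\R^{2}$-valued function, showing that its $\,e^{\i\th}\,$ component vanishes forces it to be a scalar multiple of $\,\i\,e^{\i\th}\,$, which is exactly the claim.

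The key inputs are the support-parametrization identities that $\,\G^{*}(\,\cdot,z)\,$ satisfies at $\,(\th,z)\,$, inherited from the fact (Observation \ref{obs:centrix}) that it support-parametrizes the centered oval $\,\ov(z)-\cx(z)\,$. These give us the basic identity
\[
	\frac{\bd\G^{*}}{\bd\th}\cdot e^{\i\th} = 0\,,
\]
together with two consequences obtained by differentiating it: with respect to $\,z\,$,
\[
	\frac{\,\bd^{2}\G^{*}}{\bd\th\,\bd z}\cdot e^{\i\th} = 0\,,
\]
and with respect to $\,\th\,$ (using $\,\bd\G^{*}/\bd\th = |\bd\G^{*}/\bd\th|\,\i\,e^{\i\th}\,$, compare (\ref{eqn:th''})),
\[
	\frac{\,\bd^{2}\G^{*}}{\bd\th^{2}}\cdot e^{\i\th} = -\left|\frac{\bd\G^{*}}{\bd\th}\right|\,.
\]

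Now differentiate the three summands on the right side of Lemma \ref{lem:dceps} with respect to $\,\th\,$, then take the inner product with $\,e^{\i\th}\,$. The middle summand $\,(\tau\cdot\cx(z))\cx'(z)\,$ is independent of $\,\th\,$, so it contributes nothing. In the first summand, two of the three terms produced by the product rule carry the factor $\,(\bd\G^{*}/\bd\th)\cdot e^{\i\th}\,$ and hence vanish; only the term differentiating $\,\bd\G^{*}/\bd\th\,$ itself survives, and it contributes $\,-|\bd\G^{*}/\bd\th|\,(\tau\cdot\i\,e^{\i\th})\,(\bd\G^{*}/\bd z\cdot e^{\i\th})\,$. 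In the third summand, one term dies by the mixed-derivative identity above, leaving $\,(\tau\cdot\bd\G^{*}/\bd\th)\,(\bd\G^{*}/\bd z\cdot e^{\i\th})\,$; rewriting $\,\bd\G^{*}/\bd\th = |\bd\G^{*}/\bd\th|\,\i\,e^{\i\th}\,$ turns this into $\,+|\bd\G^{*}/\bd\th|\,(\tau\cdot\i\,e^{\i\th})\,(\bd\G^{*}/\bd z\cdot e^{\i\th})\,$, the exact negative of the first contribution. The two surviving terms cancel, and the $\,e^{\i\th}\,$ component is zero.

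The main obstacle is purely bookkeeping: keeping track of which of the several product-rule terms survive. The conceptual content is just that differentiating $\,\bd\G^{*}/\bd\th\cdot e^{\i\th}\equiv 0\,$ once in $\,z\,$ and once in $\,\th\,$ supplies exactly the two identities needed to collapse everything onto the $\,\i\,e^{\i\th}\,$ axis. Once the cancellation is observed, we simply define $\,f_{\tau}(\th,z)\,$ to be the $\,\i\,e^{\i\th}$-component of $\,\bd^{2}\cx_{\eps}/\bd\eps\,\bd\th|_{\eps=0}\,$ and the Observation follows.
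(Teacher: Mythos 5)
Your argument is correct, but it takes a genuinely different route from the paper. The paper's own proof is essentially one line and uses neither the centrality hypothesis nor Lemma \ref{lem:dceps}: by (\ref{eqn:cxe}), $\,\cx_{\eps}(\th)\,$ is half the sum $\,\g_{\eps}(\th)+\g_{\eps}(\th+\pi)\,$ of values of the \emph{support parametrization} of $\,\ov(b,\eps)\,$, so the defining property (\ref{eqn:spt}) makes $\,\bd\cx_{\eps}/\bd\th\,$ a scalar multiple of $\,\i\,e^{\i\th}\,$ for \emph{every} $\,\eps\,$, and differentiating in $\,\eps\,$ touches only that scalar. You instead differentiate the formula of Lemma \ref{lem:dceps} in $\,\th\,$ and check by hand that its $\,e^{\i\th}$-component vanishes; your bookkeeping is accurate---the middle summand is $\th$-independent, the identities $\,\G^{*}_{\th}\cdot e^{\i\th}=0\,$, $\,\G^{*}_{z\th}\cdot e^{\i\th}=0\,$ and $\,\G^{*}_{\th\th}\cdot e^{\i\th}=-\mag{\G^{*}_{\th}}\,$ kill the right terms, and the surviving contributions $\,-\mag{\G^{*}_{\th}}\left(\tau\cdot\i\,e^{\i\th}\right)\left(\G^{*}_{z}\cdot e^{\i\th}\right)\,$ and $\,+\mag{\G^{*}_{\th}}\left(\tau\cdot\i\,e^{\i\th}\right)\left(\G^{*}_{z}\cdot e^{\i\th}\right)\,$ indeed cancel. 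In effect you carry out, at the level of the $\,e^{\i\th}$-component, part of the computation the paper defers to Proposition \ref{prop:pdes}. Two remarks: first, since you differentiate $\,\bd\cx_{\eps}/\bd\eps\big|_{\eps=0}\,$ in $\,\th\,$, you need equality of the mixed partials to conclude the statement as written; this is harmless (and the paper makes the same identification when it evaluates $\,f_{\tau}\,$ in Proposition \ref{prop:pdes}), but the paper's proof of the Observation itself avoids it, and also avoids needing the cross-sections to be central at all. What your route buys is an explicit expression for $\,f_{\tau}\,$ ready for the next step; what the paper's buys is brevity and the slightly stronger fact that the direction statement holds for all small $\,\eps\,$, not only at $\,\eps=0\,$.
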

\smallskip

\begin{proof}
We get $\,\cx_{\eps}\,$ by symmetrizing each member in a smooth family of support 
parametrizations:
\begin{equation*}
	\cx_{\eps}\,(\th) = \half\left(\g_{\eps}(\th)+\g_{\eps}(\th+\pi)\right)
\end{equation*}
Indeed, our formula (\ref{eqn:cxe}) expresses $\,\cx_{\eps}\,$ in this way. It then follows
from the defining condition (\ref{eqn:spt}) for support parametrizations, that
\begin{equation*}
	\frac{\bd}{\bd\th}\cx_{\eps} = \half\bigl(\mag{\g_{\eps}'(\th)}+\mag{\g_{\eps}'(\th+\pi)}\bigr)\i\,e^{\i\th}
\end{equation*}
Differentiation with respect to $\,\eps\,$ affects only the scalar coefficient of $\,\i\,e^{\i\th}\,$ here, 
making the desired fact obvious.
\end{proof}

Thanks to Observation \ref{obs:ftau}, the vanishing condition (\ref{eqn:obstruct}) reduces
to $\,f_{\tau}\equiv 0\,$.  The two crucial PDE's we have been aiming toward merely interpret 
this simple equation and now make their appearance in the statement of Proposition \ref{prop:pdes} below.

As we have explained above, Proposition \ref{prop:pdes} is the technical heart of this section. 
It also marks our first real use of the \cpo\ assumption: Up to now, our results have at most
assumed central symmetry for the \emph{horizontal} cross-sections of $\,\tube\,$.

To set up the statement of Proposition \ref{prop:pdes}, recall that for each $\,|z|<1\,$,
$\,\G^{*}(\,\cdot, z)\,$ support-parametrizes the horizontal cross-section $\,\ov(z)-\cx(z)\,$
of the rectified tube $\,\tube^{*}\,$. There consequently exists a $\,C^{2}\,$ function
\begin{equation*}
	h:\US^{1}\times[-1,1]\to\R
\end{equation*}
which, for each fixed $\,|z|<1\,$, yields the support function of that oval. We 
call $\,h\,$ the \emph{transverse support function of $\,\tube^{*}\,$}.

To simplify notation, we now adopt the convention of indicating partial differentiation
with respect to a given variable by subscripting with that variable.

\begin{prop}\label{prop:pdes}
	On a transversely convex tube $\,\tube\,$ with \cpo, the transverse support 
	function $\,h\,$ of $\,\tube^{*}\,$ satisfies two partial differential equations:
	\medskip	
	\begin{equation*}
		\bigl(h_{z}\left(h+h_{\th\th}\right)\bigr)_{\th}
			+
		\bigl(h_{\th}\left(h+h_{\th\th}\right)\bigr)_{z}
		=0
	\end{equation*}
	and
	\begin{equation*}
		h\,\left(h+h_{\th\th}\right)_{z}
		- 
		\left(h+h_{\th\th}\right)\,h_{z}
		=0\,.
	\end{equation*}
\end{prop}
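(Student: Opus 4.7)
The plan is to differentiate the formula from Lemma \ref{lem:dceps} with respect to $\,\th\,$, invoke the vanishing condition (\ref{eqn:obstruct}) by way of Observation \ref{obs:ftau}, and then exploit the arbitrariness of the tilt-direction $\,\tau\in\US^{1}\,$ to split the resulting scalar identity into two independent partial differential equations.

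More precisely, denote the right side of the Lemma \ref{lem:dceps} formula by $\,E(\th,z)\,$. Observation \ref{obs:ftau} tells us $\,E_{\th}\,$ is already parallel to $\,\i e^{\i\th}\,$, so (\ref{eqn:obstruct}) reduces to the single scalar equation $\,E_{\th}\cdot\i e^{\i\th}=0\,$. Crucially, $\,\tau\,$ enters $\,E\,$ linearly, through the three inner products $\,\tau\cdot\i e^{\i\th}\,$, $\,\tau\cdot\cx(z)\,$, and $\,\tau\cdot\G^{*}\,$, and $\th$-differentiation kills the middle term since $\,\cx(z)\,$ is $\th$-independent. Writing $\,\tau=(\tau\cdot e^{\i\th})\,e^{\i\th}+(\tau\cdot\i e^{\i\th})\,\i e^{\i\th}\,$, the vanishing of $\,E_{\th}\cdot\i e^{\i\th}\,$ for \emph{every} $\,\tau\,$ then decouples into two scalar equations---one from the coefficient of $\,\tau\cdot e^{\i\th}\,$ and one from that of $\,\tau\cdot\i e^{\i\th}\,$.

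To translate these into identities for $\,h\,$, I will use (\ref{eqn:sptfn2}) and (\ref{eqn:speed}) to record
\[
	\G^{*}=h\,e^{\i\th}+h_{\th}\,\i e^{\i\th},\qquad
	\G^{*}_{\th}=(h+h_{\th\th})\,\i e^{\i\th},
\]
and then, by differentiating in $\,z\,$,
\[
	\G^{*}_{z}=h_{z}\,e^{\i\th}+h_{\th z}\,\i e^{\i\th},\qquad
	\G^{*}_{\th z}=(h_{z}+h_{\th\th z})\,\i e^{\i\th}.
\]
Applying the chain rule to $\,E_{\th}\,$, substituting these expressions, and using the frame-derivative rules $\,\bd_{\th}e^{\i\th}=\i e^{\i\th}\,$ and $\,\bd_{\th}(\i e^{\i\th})=-e^{\i\th}\,$, I expect the coefficient of $\,\tau\cdot e^{\i\th}\,$ in $\,E_{\th}\cdot\i e^{\i\th}\,$ to collapse to $\,h\,(h+h_{\th\th})_{z}-(h+h_{\th\th})\,h_{z}\,$, yielding the second PDE, and the coefficient of $\,\tau\cdot\i e^{\i\th}\,$ to collapse to $\,\bigl(h_{z}(h+h_{\th\th})\bigr)_{\th}+\bigl(h_{\th}(h+h_{\th\th})\bigr)_{z}\,$, yielding the first.

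The main obstacle is purely the chain-rule bookkeeping: several summands appear and must be tracked carefully across the two basis directions $\,e^{\i\th}\,$ and $\,\i e^{\i\th}\,$ before pairwise cancellations reveal the compact final form. Observation \ref{obs:ftau} lightens this burden, since it guarantees in advance that the $\,e^{\i\th}$-component of $\,E_{\th}\,$ vanishes identically---a useful consistency check on the calculation---so nothing is lost by projecting onto $\,\i e^{\i\th}\,$ alone.
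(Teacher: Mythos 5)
Your proposal is correct and follows essentially the same route as the paper: differentiate the Lemma \ref{lem:dceps} formula in $\,\th\,$ (killing the $\,\cx\,$-term), use Observation \ref{obs:ftau} to reduce (\ref{eqn:obstruct}) to the single scalar condition on the $\,\i e^{\i\th}$-component, separate the coefficients of $\,\tau\cdot e^{\i\th}\,$ and $\,\tau\cdot\i e^{\i\th}\,$ by the arbitrariness of $\,\tau\,$, and convert to $\,h\,$ via the support-function identities. Your predicted attributions---the $\,\tau\cdot e^{\i\th}\,$ coefficient giving the second equation and the $\,\tau\cdot\i e^{\i\th}\,$ coefficient giving the first---match the paper's computation exactly.
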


\begin{proof}
	Differentiation with respect to $\,\th\,$ annihilates $\,\cx\,$ and $\,\cx'\,$, and
	hence Lemma \ref{lem:dceps} combines with Observation \ref{obs:ftau} 
	to give
	\begin{eqnarray*}
		f_{\tau}
		&=&
		\i\,e^{\i\th}\cdot \frac{\bd^{2}\cx_{\eps}}{\bd\eps\,\bd\th}\Big|_{\eps=0}\\
		&&\\
		&=&
		\i\,e^{\i\th}\cdot
		\bigl[
			\left(\tau\cdot\i\,e^{\i\th}\right)\left(\G^{*}_{z}\cdot e^{\i\th}\right)\G^{*}_{\th}
		\bigr]_{\th}
		+
		\i\,e^{\i\th}\cdot
		\bigl[
			\left(\tau\cdot\G^{*}\right)\G^{*}_{z}
		\bigr]_{\th}
	\end{eqnarray*}
	Since $\,\G^{*}\,$ support-parametrizes $\,\ov(z)-\cx(z)\,$ for each $\,z\,$, however, 
	we have $\,\G^{*}_{\th}=\mag{\G^{*}_{\th}}\i\,e^{\i\th}\,$. This is perpendicular
	to $\,-e^{\i\th}=\left(\i\,e^{\i\th}\right)_{\th}\,$,  so the product rule 
	lets us rewrite the first term on the right above as\medskip
	\begin{equation*}
		\bigl[
			\left(\tau\cdot\i\,e^{\i\th}\right)\left(\G^{*}_{z}\cdot e^{\i\th}\right)
			\mag{\G^{*}_{\th}}
		\bigr]_{\th}\,.
	\end{equation*}

	To evaluate the second term, note that 
	$\,\G^{*}_{\th}\cdot\tau = \mag{\G^{*}_{\th}}\i\,e^{\i\th}\cdot\tau\,$, and\medskip
	\begin{equation*}
		\G^{*}_{z\th}\cdot\i\,e^{\i\th} 
		= 
		\left(\G^{*}_{\th}\cdot\i\,e^{\i\th}\right)_{z}
		=
		\mag{\G^{*}_{\th}}_{z}\,.
	\end{equation*}

	Taking all these facts into account, our expansion of $\,f_{\tau}\,$ becomes\medskip
	\begin{eqnarray*}\label{eqn:216}
		f_{\tau}&=&
		\bigl[
			\left(\tau\cdot\i\,e^{\i\th}\right)\left(\G^{*}_{z}\cdot e^{\i\th}\right)
			\mag{\G^{*}_{\th}}
		\bigr]_{\th}\\
		&&\qquad	+\ 
		\mag{\G^{*}_{\th}}\left(\tau\cdot\i\,e^{\i\th}\right)\G^{*}_{z}\cdot\i\,e^{\i\,\th}
		\ +\ 
		\left(\tau\cdot\G^{*}\right)\mag{\G^{*}_{\th}}_{z}\nonumber
	\end{eqnarray*}
	Now separate multiples of $\,\tau\cdot\i\,e^{\i\th}\,$ from those
	of $\,\tau\cdot e^{\i\th}\,$, noting that
	$\,\left(\tau\cdot\i\,e^{\i\th}\right)_{\th}=-\left(\tau\cdot e^{\i\th}\right)\,$, and
	that by orthonormal expansion,\medskip	
	\begin{equation*}
		\tau\cdot\G^{*}
		= 
		\left(\tau\cdot e^{\i\th}\right)\left(e^{\i\th}\cdot\G^{*}\right)
		+
		\left(\tau\cdot \i\,e^{\i\th}\right)\left(\i\,e^{\i\th}\cdot\G^{*}\right)\,.
	\end{equation*}

	Use these facts to expand $\,f_{\tau}\,$ further, collecting multiples of
	$\,\tau\cdot\i\,e^{\i\th}\,$ and $\,\tau\cdot e^{\i\th}\,$, and noticing that\medskip	
	\begin{equation*}
		\left(\G^{*}\cdot\i\,e^{\i\th}\right)\mag{\G^{*}_{\th}}_{z}
		+
		\left(\G^{*}_{z}\cdot \i\,e^{\i\th}\right)\mag{\G^{*}_{\th}}
		=
		\bigl(\left(\G^{*}\cdot\i\,e^{\i\th}\right)\mag{\G^{*}_{\th}}\bigr)_{z}
	\end{equation*}
	to get
	\begin{eqnarray}\label{eqn:ftau}
		\lefteqn{f_{\tau}	=}\nonumber\\
		&&
		\left(\tau\cdot\i\,e^{\i\th}\right)
		\Bigl[
			\bigl(\left(\G^{*}_{z}\cdot e^{\i\th}\right)\mag{\G^{*}_{\th}}\bigr)_{\th}
			+
			\bigl(\left(\G^{*}\cdot\i\,e^{\i\th}\right)\mag{\G^{*}_{\th}}\bigr)_{z}
		\Bigr]
		\\
		&&\qquad\qquad +
		\left(\tau\cdot e^{\i\th}\right)
		\Bigl[
			\left(e^{\i\th}\cdot\G^{*}\right)\mag{\G^{*}_{\th}}_{z}
			-
			\left(\G^{*}_{z}\cdot e^{\i\th}\right)\mag{\G^{*}_{\th}}
		\Bigr]\,.\nonumber
	\end{eqnarray}

	Now we invoke the central plane oval assumption, observing that
	\emph{when $\,\tube\,$ has \cpo, we must have $\,f_{\tau}\equiv 0\,$}.
	
	Indeed, \cpo\ endows the tilted ovals $\,\bar\ov(z,\eps)\,$ with
	central symmetry for all $\,\tau\in\US^{1}\,$, all $\,-1<z<1\,$, and all 
	sufficiently small $\,\eps\,$. As noted earlier, the projected ovals 
	$\,\ov(z,\eps)\,$ inherit that symmetry too, since the projection
	$\,(x,y,z)\to(x,y)\,$ induces an affine isomorphism from any non-vertical plane
	to $\,\R^{2}\,$. 
	
	Observation \ref{obs:centrix} then makes the centrix $\,\cx_{\eps}\,$
	of $\,\ov(z,\eps)\,$ constant (i.e. independent of $\,\th\,$) for any tilt-direction
	$\,\tau\,$, any $\,|z|<1\,$ and all any sufficiently small $\,\eps\,$.
	The vanishing condition (\ref{eqn:obstruct}) therefore obtains. Given
	Observation \ref{obs:ftau}, this forces $\,f_{\tau}\equiv 0\,$ as claimed.
	
	We may consequently set the right-hand side of (\ref{eqn:ftau}) equal to
	zero. But the resulting identity holds for \emph{any} tilt-direction $\,\tau=:e^{\i\phi}\in\US^{1}\,$,
	and \emph{the coefficients $\,\tau\cdot e^{\i\th}=\cos(\phi-\th)\,$ and 
	$\,\tau\cdot \i\,e^{\i\th}=\sin(\phi-\th)\,$ appearing there are clearly linearly independent 
	functions of $\,\tau\,$.} The terms they multiply must therefore vanish \emph{individually}. 
	In short, we now have
	\medskip
	\begin{eqnarray}
		0&=&
			\Bigl(\left(\G^{*}_{z}\cdot e^{\i\th}\right)\mag{\G^{*}_{\th}}\Bigr)_{\th}
			+
			\Bigl(\left(\G^{*}\cdot\i\,e^{\i\th}\right)\mag{\G^{*}_{\th}}\Bigr)_{z}
		\label{eqn:a}
		\\
		0&=&
			\left(e^{\i\th}\cdot\G^{*}\right)\mag{\G^{*}_{\th}}_{z}
			-
			\left(\G^{*}_{z}\cdot e^{\i\th}\right)\mag{\G^{*}_{\th}}
		\label{eqn:b}
	\end{eqnarray}

	For each $\,|z|<1\,$, the relationship between the support parametrization $\,\G^{*}(\,\cdot,z)\,$ 	
	of $\,\ov(z)\,$ and its support function $\,h(\,\cdot,z)\,$, as detailed in \S\ref{sec:ovals}, 
	now lets us write
	\begin{equation*}
		\G^{*} = \left(h+\i\,h_{\th}\right)\,e^{\i\th}
		\quad\text{and}\quad
		\G^{*}_{\th} = \left(h+h_{\th\th}\right)\i\,e^{\i\th}\,,
	\end{equation*}
	from which we can immediately deduce
	\begin{equation*}
		\begin{array}{rclcccl}
			\G^{*}\cdot e^{\i\th}&=& h\,,
			&\quad&
			\G^{*}\cdot\i\,e^{\i\th}&=&h_{\th}\\
			\G^{*}_{z}\cdot e^{\i\th} &=& h_{z}\,,
			&\quad&
			\mag{\G^{*}_{\th}}&=&h+h_{\th\th}\,,
		\end{array}
	\end{equation*}

	Substituting these into  (\ref{eqn:a}) and (\ref{eqn:b}) instantly gives the 
	differential equations we want.
\end{proof}

We can now prove our Splitting Lemma \ref{lem:split}, restated below. As above, 
$\,h\,$ denotes the transverse support function of $\,\tube^{*}\,$, the rectification
of a transversely convex tube $\,\tube\,$ with central curve $\,\cx\,$. Recall that we
say $\,\tube^{*}\,$ \emph{splits} if we can factor its support map $\,\G^{*}(z,\th)\,$ as a
product $\,\g(\th)r(z)\,$, with $\,\g\,$ parametrizing a fixed oval and $\,r>0\,$.\medskip

\begin{prop}[Splitting Lemma]\label{lem:split}
	If a transversely convex tube $\,\tube\,$ in standard position has \cpo,
	then its rectification $\,\tube^{*}\,$ splits.
\end{prop}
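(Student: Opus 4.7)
I would follow the roadmap sketched at the start of \S\ref{sec:tubes} and play the two PDEs of Proposition \ref{prop:pdes} against each other in three steps.

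First I extract structural information from the second PDE. Written as $\bd_z\log\bigl((h+h_{\th\th})/h\bigr)=0$, it tells us that
\[
	\phi(\th) := \frac{h(\th,z)+h_{\th\th}(\th,z)}{h(\th,z)}
\]
depends on $\th$ alone. Because each $\G^*(\,\cdot\,,z)$ support-parametrizes an origin-centered oval of $\tube^*$, both $h$ and its speed $h+h_{\th\th}$ are positive, so $\phi$ is a smooth, strictly positive, $2\pi$-periodic function, and we may rewrite the relation as $h_{\th\th}=(\phi-1)h$. Substituting $h+h_{\th\th}=\phi(\th)h$ into the first PDE and applying the product rule (remembering that $\phi$ has no $z$-derivative), and recognizing $2hh_z=(h^2)_z$ together with $2(h_zh_\th+h h_{z\th})=(h^2)_{z\th}$, the equation collapses to
\[
	2\phi(\th)\,(h^2)_{z\th}+\phi'(\th)\,(h^2)_z=0,
	\qquad\text{equivalently}\qquad
	\Bigl(\sqrt{\phi(\th)}\,(h^2)_z\Bigr)_\th=0.
\]
This is the promised first integral: $\sqrt{\phi}\,(h^2)_z$ is a function of $z$ only. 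Setting $P(\th):=1/\sqrt{\phi(\th)}$ and integrating in $z$ yields
\[
	h^2(\th,z)=P(\th)\,R(z)+C(\th)
\]
for some $C^2$ functions $R$ of $z$ and $C$ of $\th$.

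The splitting claim now reduces to showing that $C$ is a constant multiple of $P$: for then $h^2=P(\th)\bigl(R(z)+k\bigr)$, so $h(\th,z)=\sqrt{P(\th)}\sqrt{R(z)+k}$ factors as a $\th$-function times a $z$-function, and $\G^*=(h+\i h_\th)e^{\i\th}$ inherits the same factorization, which is exactly the splitting of $\tube^*$. To prove this I would return to the second PDE in the form $h_{\th\th}=(\phi-1)h$. Differentiating $u:=h^2=PR+C$ twice in $\th$ and eliminating $h_\th^2$ via $(u_\th)^2=4h^2h_\th^2$ produces the identity
\[
	(P'R+C')^2=2(PR+C)(P''R+C'')-4(\phi-1)(PR+C)^2,
\]
which, assuming $R$ is nonconstant as a function of $z$, forces the $\th$-dependent coefficients of each power of $R$ to vanish separately. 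Matching $R^2$ coefficients recovers $4(\phi-1)=(2PP''-(P')^2)/P^2$; substituting this into the $R^1$ coefficients and introducing the new unknown $Q:=C/P$ simplifies everything, after a short calculation, to the clean identity $\bigl(P(\th)\,Q'(\th)\bigr)'=0$.

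The final, and I expect the most delicate, step is a compactness argument on $\US^1$. From $PQ'\equiv\alpha$ (constant) we get $Q'=\alpha/P$, and periodicity of $Q$ on $\US^1$ then forces $\alpha\oint d\th/P=0$. Since $P>0$ everywhere, this integral is strictly positive, hence $\alpha=0$, so $Q$ is constant, $C=kP$, and the splitting $h^2=P(\th)(R(z)+k)$ follows. (The borderline case in which $R$ is constant is even easier: $h$ then depends only on $\th$, so $\tube^*$ is a cylinder and splitting holds with $r\equiv 1$.)
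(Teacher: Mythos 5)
Your argument is correct, and its first half coincides with the paper's: from the second PDE of Proposition \ref{prop:pdes} you get $h+h_{\th\th}=\phi(\th)\,h$ with $\phi>0$ (the paper's $q^{2}$, equation (\ref{eqn:floq})), you then rewrite the first PDE as a first-order equation for $(h^{2})_{z}$ admitting a first integral ($\sqrt{\phi}\,(h^{2})_{z}$ independent of $\th$, equivalent to the paper's $(H_{z}^{2}q^{2})_{\th}=0$), and you arrive at the same additive decomposition $h^{2}=P(\th)R(z)+C(\th)$ that the paper writes as $H(\th,z)=H(\th,0)+\Phi(z)/q(\th)$. Where you genuinely diverge is the endgame, i.e.\ the passage from this additive form to the multiplicative splitting. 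The paper sets $\a=1/(H(\th,0)q(\th))$, evaluates at a maximum point $\bar\th$ of $\a$, and invokes Picard uniqueness for the linear ODE (\ref{eqn:floq}) to force $h(\th,z)=h(\th,0)\sqrt{1+\bar\a\,\Phi(z)}$ everywhere. You instead feed $u=h^{2}=PR+C$ back into the same ODE in the Riccati-type form $(u_{\th})^{2}=2u\,u_{\th\th}-4(\phi-1)u^{2}$, match coefficients of powers of $R$ (legitimate when $R$ is nonconstant, since a continuous nonconstant $R$ takes infinitely many values), reduce to $\bigl(P\,Q'\bigr)'=0$ with $Q=C/P$, and kill the constant by integrating $Q'=\a/P$ over $\US^{1}$ using $P>0$; the degenerate case $R\equiv\mathrm{const}$ you dispose of separately, and I checked that your coefficient computation indeed collapses to $P^{2}(PQ')'=0$. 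Each route buys something: the paper's extremum-plus-uniqueness step is shorter and sidesteps the coefficient bookkeeping, while yours is more elementary (no appeal to ODE uniqueness), makes the role of compactness/periodicity of $\US^{1}$ explicit, and isolates exactly where nondegeneracy of the $z$-dependence is used. One small point to mind if you write this up: to differentiate $P$ and $C$ twice you should note that when $R$ is nonconstant, $P$ and $C$ are automatically $C^{2}$, being linear combinations of the $C^{2}$ functions $h^{2}(\cdot,z_{1})$, $h^{2}(\cdot,z_{2})$ for two heights with $R(z_{1})\neq R(z_{2})$; this is the same level of care the paper itself glosses over when differentiating $q^{2}$ and $\a$ in $\th$.
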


\begin{proof}
	It will clearly suffice to prove that the transverse support function $\,h\,$
	of $\,\G^{*}\,$ factors as $\,h(z,\th)=h(\th)\,r(z)\,$. We know that
	$\,h(z,\th)\,$ satisfies the two differential equations of Proposition
	\ref{prop:pdes}, and we start by noticing that the second equation there 
	forms the numerator of a quotient-rule calculation. 
	Specifically, it implies\medskip
	\begin{equation*}
		\frac{\bd}{\bd z}
		\left(
			\frac{h+h_{\th\th}}{h}
		\right)
		= 0\,,
	\end{equation*}
	from which we easily deduce
	\begin{equation}\label{eqn:floq}
		h_{\th\th}+h = q^{2}(\th)\,h
	\end{equation}
	for some \emph{strictly} positive, $z$-independent function $\,q\,$ on $\,\US^{1}\,$.  
	We can assume positivity of $\,q\,$ because
	$\,\ov(z)-\cx(z)\,$ is origin-centered and strictly convex for each $\,z\,$, properties 
	that, by equations (\ref{eqn:sptfn1}) and (\ref{eqn:speed}), make both $\,h\,$ and 
	$\,h_{\th\th}+h\,$ strictly positive.
	
	In any case, since $\,q\,$ depends only on $\,\th\,$, we see that the support functions 
	of the translated ovals $\,\ov(z)-\cx(z)\,$ all solve the same ordinary differential equation, 	namely (\ref{eqn:floq}). Such equations have independent solutions, of course, so by 
	itself, (\ref{eqn:floq}) leaves us short of splitting. But it lets us rewrite the 
	\emph{first} differential equation of Proposition \ref{prop:pdes} as
	\begin{equation}\label{eqn:ode2'}
		\bigl(h_{z}\,h\,q^{2}\bigr)_{\th} + \bigl(h_{\th}\,h\,q^{2}\bigr)_{z}=0\,.
	\end{equation}

	Since $\,h_{z}h\,$ and $\,h_{\th}h\,$ are derivatives of (half) the
	\emph{squared} support function 
	\begin{equation*}
		H(\th,z):=h^{2}(\th,z)\,,
	\end{equation*}
	we can the exploit $z$-independence of $\,q\,$, and use $\,H_{z\th}=H_{\th z}\,$ 
	to rewrite (\ref{eqn:ode2'}) in the form of a first-order equation for $\,H_{z}\,$:
	\begin{equation*}
		2H_{z\th}\,q^{2}+H_{z}\bigl(q^{2}\bigr)_{\th} = 0\,.
	\end{equation*}
	Now multiply by $\,H_{z}\,$ to recognize that (\ref{eqn:ode2'}) actually reduces 
	to
	\begin{equation*}
		\bigl(H_{z}^{2}q^{2}\bigr)_{\th}=0\,.
	\end{equation*}
	Evidently, there exists a $\th$-independent function $\,\phi(z)\,$ such that
	\begin{equation*}
		H_{z}(\th,z) = \phi(z)\big/q(\th)\,.
	\end{equation*}

	Integrating with respect to $\,z\,$ then yields\medskip
	\begin{equation*}
		H(\th,z) = H(\th,0) + \frac{\Phi(z)}{q(\th)}\,,
		\quad\text{where}\quad
		\Phi(z):=\int_{0}^{z}\phi(s)\,ds\,.
	\end{equation*}
	Rewrite this as  
	\begin{equation*}
		H(\th,z) = H(\th,0)\left(1 + \a(\th)\Phi(z)\right)\,,
	\end{equation*}
	where
	\begin{equation*}
		\a(\th):=\frac{1}{H(\th,0)\,q(\th)}\,.
	\end{equation*}
	Since $\,H=h^{2}\,$, and, as the support function of an origin-centered oval, $\,h(\th,z)$
	is always positive, we see that $\,1+\a\,\Phi>0\,$ too. Hence
	\begin{equation}\label{eqn:presplit}
		h(\th,z) = h(\th,0)\sqrt{1+\a(\th)\,\Phi(z)}\,.
	\end{equation}

	The continuity of $\,\a\,$ guarantees it a maximum value $\,\bar\a\,$ 
	at some point $\,\bar\th\in\US^{1}\,$, and there, (\ref{eqn:presplit}) yields
	\medskip
	\begin{eqnarray*}
		h(\bar\th,z) 
		&=& h(\bar\th,0)\sqrt{1+\bar\a\,\Phi(z)}\\
		h_{\th}(\bar\th,z) 
		&=& h_{\th}(\bar\th,0)\sqrt{1+\bar\a\,\Phi(z)}\,.\rule{0pt}{6mm}
	\end{eqnarray*}

	These identities show that for any fixed $\,z\,$ with $\,|z|<1\,$, 
	the functions $\,h(\th,z)\,$ and 
	$\,h(\th,0)\sqrt{1+\bar\a\,\Phi(z)}\,$ both obey the same initial conditions at $\,\th=\bar\th\,$. 
	Since both also solve (\ref{eqn:floq}), Picard's uniqueness theorem forces them to agree 
	everywhere. 
	
	The Lemma consequently holds with
	\medskip
	\begin{equation*}
		r(z) = \sqrt{1+\bar\a\,\Phi(z)}
		\quad\text{and}\quad
		h(\th) = h(\th,0)\,.
	\end{equation*}
\end{proof}

We now reach the main goal of this section---a geometric consequence of the Splitting lemma:

\begin{prop}\label{prop:branch}
	Suppose $\,\tube\,$ is a transversely convex tube with \cpo.
	Then its rectification $\,\tube^{*}\,$ is either
	\begin{itemize}
		\item[(i)] The cylinder over a central oval, or
		\smallskip
		\item[(ii)] Affinely congruent to a surface of revolution.
	\end{itemize}
\end{prop}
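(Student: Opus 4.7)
The plan is to feed the factorization $h(\th,z) = h(\th,0)\,r(z)$ produced by the Splitting Lemma \ref{lem:split} back into the \emph{first} PDE of Proposition \ref{prop:pdes}. Writing $h(\th):=h(\th,0)$ for brevity, every $\th$-derivative in that PDE pulls out pure powers of $r(z)$, and every $z$-derivative pulls out a factor of $r'(z)$; after a short bookkeeping calculation the whole equation should collapse to
\[
	r(z)\,r'(z)\,P(\th) \;=\; 0\,,
	\qquad
	P(\th) := \bigl(h(h+h'')\bigr)' + 2h'(h+h'').
\]
A mechanical expansion of $P$ should then identify it as $\tfrac12\bigl((h^{2})''' + 4(h^{2})'\bigr)$. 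This algebraic identification is the only genuinely calculational step in the proof.

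From here the argument branches on whether or not $r$ is constant. If $r$ is constant on $I$, then (since $r(0)=1$) we have $h(\th,z)\equiv h(\th)$, so every horizontal cross-section of $\tube^{*}$ is the same origin-centered oval $\g$. In this case $\tube^{*}$ is a cylinder over the central oval $\g$, giving alternative (i).

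Otherwise $r$ is non-constant, so $r(z)r'(z)\ne 0$ at some $z_{0}\in I$. Because $P(\th)$ is independent of $z$, we conclude $P\equiv 0$, i.e.\ $(h^{2})''' + 4(h^{2})'\equiv 0$. Observation \ref{prop:H'''} then forces $h$ to be the support function of an origin-centered ellipse $E$, so the horizontal cross-section of $\tube^{*}$ at height $z$ is the scaled ellipse $r(z)\,E$. Letting $A:\R^{2}\to\R^{2}$ be the affine automorphism that carries $E$ to the unit circle, the affine isomorphism $(p,z)\mapsto(A(p),z)$ of $\R^{3}$ then sends $\tube^{*}$ to the surface whose cross-section at height $z$ is the circle of radius $r(z)$ about the origin---visibly a surface of revolution about the $z$-axis. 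This is alternative (ii).

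The main obstacle is just the algebraic recognition of $P$ as a constant multiple of $(h^{2})''' + 4(h^{2})'$; once that identity is in hand, Observation \ref{prop:H'''} does all the remaining geometric work, and the case-split is essentially automatic.
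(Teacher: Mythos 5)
Your proposal is correct and follows essentially the same route as the paper: substituting the split form into the first PDE of Proposition \ref{prop:pdes} to get $\,r\,r'\bigl(h\,h'''+3\,h'h''+4\,h\,h'\bigr)=0\,$, recognizing this (when $\,r\,$ is non-constant) as the linear equation $\,(h^{2})'''+4(h^{2})'=0\,$, and invoking Observation \ref{prop:H'''} to get an ellipse, hence a surface of revolution after a linear change of horizontal coordinates. Your algebraic identification of $\,P\,$ with $\,\tfrac12\bigl((h^{2})'''+4(h^{2})'\bigr)\,$ checks out, and your explicit handling of the constant-$\,r\,$ (cylinder) case is the same dichotomy the paper treats implicitly.
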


\begin{proof}
We show that when $\,\tube\,$ is a transversely convex tube in standard position,
and $\,\tube^{*}\,$ is \emph{not} a cylinder, there exists a single linear isomorphism that
fixes the $z$-axis while making each horizontal cross-section $\,\ov(z)\,$ of $\,\tube^{*}\,$ simultaneously circular. This clearly implies the desired result.

We start by using the Splitting Lemma to factor the transverse support function 
$\,h\,$ of $\,\tube^{*}\,$ as
\begin{equation}\label{eqn:fac}
	h(\th,z) = r(z)\,h(\th)\,.
\end{equation}
Put this factorization \emph{back} into the first differential equation in 
Proposition \ref{prop:pdes} and simplify, to find that $\,r\,$ and $\,h\,$ now jointly solve
\begin{equation}\label{eqn:theguy}
	r\,r'\bigl(h\,h''' +3\,h'h'' + 4\,h\,h'\bigr) = 0
\end{equation}
on $\,\US^{1}\times (-1,1)\,$. We have assumed that $\,\tube^{*}\,$ is not cylindrical, 
so $\,r'(z_{0})\ne 0\,$ for some $\,-1<z_{0}<1\,$. Evaluating (\ref{eqn:theguy}) 
at that height, we then deduce that the horizontal support function $\,h(\th)\,$ solves 
the following ordinary differential equation:
\begin{equation*}
	h\,h''' +3\,h'h'' + 4\,h\,h'=0\,.
\end{equation*}
The reader will find it routine to verify what came as a pleasant surprise to us: 
That this quadratic ODE for $\,h\,$ reduces to a linear equation---one that could 
hardly be more familiar---for the \emph{squared} support function $\,H(\th):=h^{2}(\th)\,$:
\begin{equation*}
	H''' + 4H' = 0\,.
\end{equation*}
By Proposition \ref{prop:H'''}, this makes $\,h(\th)\,$ the support function of an 
origin-centered \emph{ellipse} $\,\ov_{0}\,$. By (\ref{eqn:fac}), \emph{every} 
horizontal cross-section of $\,\tube^{*}\,$ is then homothetic to $\,\ov_{0}\,$, and 
since it is origin-centered,  $\,\ov_{0}\,$ is congruent to the unit circle via some 
linear mapping $\,A\,$ of $\,\R^{2}\,$. Extending $\,A\,$ trivially to $\,\R^{3}\,$, we 
clearly map $\,\tube^{*}\,$ to a surface of revolution, precisely as we sought to prove.	
\end{proof}


\goodbreak

\section{Straightening the central curve}\label{sec:axis}

So far we have shown, using variational and analytic arguments, that when a transversely convex 
tube has \cpo, it rectifies to either a cylinder or---up to affine isomorphism---a surface of revolution.
We now use more elementary arguments of a local geometric type to show that the rectification step 
is actually superfluous. Specifically, we prove\medskip

\begin{prop}[Axis lemma]\label{prop:axis}
	Suppose $\,\tube\,$ is a transversally convex tube with \cpo. 
	Then its central curve is affine, so that $\,\tube\,$ is affinely congruent
	to its rectification $\,\tube^{*}\,$.
\end{prop}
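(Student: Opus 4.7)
The plan is to invoke Proposition \ref{prop:branch} to reduce $\tube^{*}$, via an affine isomorphism of $\R^{3}$ preserving the $z$-direction (which preserves both \cpo\ and the affine nature of any central curve), to one of two standard forms: a cylinder $\g_{0}\times I$ with $\g_{0}$ a fixed central oval, or a surface of revolution with $\G^{*}(\th,z) = r(z)\,e^{\i\th}$. It then suffices to show $\cx''(b_{0}) = \mathbf{0}$ at every height $b_{0}\in I$. Fixing $b_{0}$, a further $z$-preserving shear $(x,y,z) \mapsto (x - \alpha - \beta z,\,y - \gamma - \delta z,\,z)$, with constants chosen so that $\cx(b_{0}) = \mathbf{0}$ and $\cx'(b_{0}) = \mathbf{0}$, leaves $\cx''(b_{0})$ intact and keeps \cpo\ in force.

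The key is that the order-$\eps^{2}$ content of \cpo\ has not yet been used---the order-$\eps^{1}$ part yielded the PDEs of Proposition \ref{prop:pdes} but constrained only $\G^{*}$. Writing $\cx_{\eps}$ as in~(\ref{eqn:cxe}) and Taylor-expanding in $\eps$ about $0$, the coefficient $v_{2}(\th) := \tfrac{1}{4}\bigl[F_{\eps\eps}(0,\th) + F_{\eps\eps}(0,\th+\pi)\bigr]$ of $\eps^{2}$ must be $\th$-independent for every tilt direction $\tau\in\US^{1}$. Under the shear reduction, the antipodal symmetry $\phi \mapsto \phi + \pi$ cancels the purely $e^{\i\phi}$-direction contributions in $F_{\eps\eps}|_{\eps=0}$, and one arrives at
\[
	v_{2}(\th)
	\ =\
	\tfrac{1}{2}\,\cx''(b_{0})\,r(b_{0})^{2}\,\bigl(\tau\cdot\g_{0}(\th)\bigr)^{2}
	\ +\
	\bigl(\text{terms in the rotating direction }\i\,e^{\i\th}\bigr),
\]
where the second summand involves the still-unknown second-order correction $\sigma_{2}(\th)$ to the support-involution, together with derivatives of $r$.

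The conclusion is a direction-counting argument. Suppose, for contradiction, $\cx''(b_{0}) \ne \mathbf{0}$, and project the identity $v_{2} \equiv \text{const}$ onto the line perpendicular to $\cx''(b_{0})$. This annihilates the first summand, and since the scalar factor $\i\,e^{\i\th}\cdot\cx''(b_{0})^{\perp}$ has isolated zeros, the resulting smooth equation forces the constant $C := v_{2}$ to lie parallel to $\cx''(b_{0})$ and pins down $\sigma_{2}(\th)$ explicitly. Feeding this $\sigma_{2}$ back into the projection along $\cx''(b_{0})$ itself, the $\i\,e^{\i\th}$-contributions cancel exactly, leaving $\tfrac{1}{2}\,r(b_{0})^{2}\,|\cx''(b_{0})|\,(\tau\cdot\g_{0}(\th))^{2} \equiv \text{const}$. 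But $(\tau\cdot\g_{0}(\th))^{2}$ is manifestly non-constant in $\th$ (an oval is not a point, and $\tau$ is a unit vector), giving the desired contradiction. Hence $\cx''(b_{0}) = \mathbf{0}$ at every $b_{0}$, so $\cx$ is affine. The main obstacle is organizing the order-$\eps^{2}$ expansion cleanly enough that the $\cx''(b_{0})$-contribution isolates as the unique fixed-direction summand; the shear reduction $\cx(b_{0}) = \cx'(b_{0}) = \mathbf{0}$ is precisely what enables this isolation.
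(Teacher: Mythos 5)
Your strategy is genuinely different from the paper's: after the reduction via Proposition \ref{prop:branch} (which the paper also makes), you differentiate the \cpo\ condition \emph{twice} in the slope $\,\eps\,$, whereas the paper never goes past first order. Its proof is a finite, non-infinitesimal symmetry argument: a Linearity Criterion (a $C^{2}$ curve that is locally \emph{odd} about every point is affine), verified in the cylinder case by showing that a $\th$-diameter of $\,\ov(b)\,$ remains a diameter of every section tilted about it (parallel tangent planes at its endpoints), so the center stays at $\,(\cx(b),b)\,$ and a whole neighborhood of $\,\ov(b)\,$ is point-symmetric; and in the rotational case by explicit algebra with $\,(x-\xi(z))^{2}+(y-\eta(z))^{2}=F(z)\,$ and the exact swap of diametrically opposite solutions. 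Your structural claim about the second-order coefficient is, for what it is worth, correct: after the shear normalization $\,\cx(b_{0})=\cx'(b_{0})=0\,$, a parity bookkeeping (all of $\,\G^{*}\,$ and its derivatives odd in $\,\th\mapsto\th+\pi\,$, the first-order corrections $\,\bd\theps/\bd\eps\,$ and $\,\bd\z/\bd\eps\,$ odd) shows the even part of $\,\bd^{2}/\bd\eps^{2}\,$ of the support parametrization is $\,\tfrac12\cx''(b_{0})\,(\tau\cdot\G^{*})^{2}\,$ plus a multiple of $\,\G^{*}_{\th}\parallel\i e^{\i\th}\,$, and your direction-counting endgame (the constant must be parallel to $\,\cx''(b_{0})\,$, the tangential coefficient must vanish, and $\,(\tau\cdot\G^{*}(\th,b_{0}))^{2}\,$ cannot be constant in $\,\th\,$) is sound.

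The genuine gap is regularity, and it is not cosmetic. The tube is only $\,C^{2}\,$, and the objects you must expand to order $\,\eps^{2}\,$ are built from the inverse Gauss maps of the tilted sections: the reparametrization $\,\theps\,$ is obtained by inverting the unit normal of $\,\th\mapsto\G(\th,\z(\eps,\th))\,$, whose dependence on $\,\eps\,$ is only $\,C^{1}\,$ when $\,\G\,$ is $\,C^{2}\,$ (the paper's Proposition \ref{prop:theps} accordingly produces only a first $\,\eps$-derivative). Computing $\,\bd^{2}\theps/\bd\eps^{2}\,$ — equivalently, the $\,\eps^{2}\,$ Taylor coefficient $\,v_{2}\,$ of the centrix, including the correction $\,\sigma_{2}\,$ whose continuity your "isolated zeros plus continuity'' step also needs — requires third derivatives of $\,\G\,$ (terms like $\,\G_{zz\th}\,$, $\,\G_{zzz}\,$), which need not exist. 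So as written your argument proves the Axis Lemma only under a $\,C^{3}\,$ (or smoother) hypothesis, strictly weaker than the stated $\,C^{2}\,$ result; this is precisely the trap the paper's finite-swap argument is designed to avoid, since central symmetry of the tilted ovals is used there exactly, not through a second-order expansion. Separately, the pivotal identity for $\,v_{2}\,$ is asserted ("one arrives at'') rather than derived; it does hold, but in a complete write-up that computation is the bulk of the work. To salvage your approach at $\,C^{2}\,$ you would need to replace the $\,\eps^{2}\,$ Taylor coefficient by a second difference quotient in $\,\eps\,$ and control it uniformly, or else argue as the paper does.
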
 

\begin{proof}
We can assume $\,\tube\,$ lies in the standard position described by Definition \ref{def:tct}, 
and it clearly suffices to prove that when $\,\tube^{*}\,$ is either a cylinder or a surface
of revolution, \cpo\ forces the axis of $\,\tube\,$ itself to be a straight line. The latter occurs 
if and only if the tube's central curve $\,\cx:(-1,1)\to\R^{2}\,$ is affine (linear plus
constant). We will establish exactly that, using the following

\textbf{Linearity Criterion:}
	\emph{A $\,C^{2}\,$ mapping $\,\cx:I\to\R\,$ is affine on an open interval $\,I\,$
	if and only if it is locally \emph{odd} around each input, in the sense that for all $\,b\in I\,$, 
	we have
	\begin{equation}\label{eqn:lc}
		\cx(b+t)-\cx(b) = -\bigl(\cx(b-t)-\cx(b)\bigr)
	\end{equation}
	for all sufficiently small $\,t\,$.}

When $\,\cx\,$ is affine, (\ref{eqn:lc}) clearly holds. To prove the converse, it suffices to show
that (\ref{eqn:lc}) implies $\,\cx''\equiv 0\,$. But that follows instantly if we differentiate it twice, and 
then let $\,t\to 0\,$.
\medskip

With this criterion in hand, we proceed, treating the cylindrical and rotationally 
symmetric cases separately.

\textbf{Cylindrical case:} 
When $\,\tube^{*}\,$ is a cylinder, its horizontal cross-section $\,\ov(z)$ at every height
$\,z\in (-1,1)\,$ translates to a fixed central oval $\,\ov_{0}\in\R^{2}\,$. Take $\,\ov_{0}\,$
to be centered at the origin and denote its support parametrization by $\,\g\,$ to get this
parametrization $\,X:\US^{1}\times(-1,1)\to\tube\,$:
\begin{equation}\label{eqn:cyl}
	X(t,z) = \left(\cx(z)+\g(t),\,z\right)\,.
\end{equation}

Now consider, for any height $\,b\in(-1,1)\,$, and any angle $\,\th\in\R\,$, the $\th$-diameter of 
$\,\ov(b)$ (Definition \ref{def:centrix}). Since $\,\g\,$ support-parametrizes $\,\ov_{0}\,$, 
the endpoints of this diameter clearly lie at $\,X(\th,b)\,$ and $\,X(\th+\pi, b)\,$, and the crucial point 
is that \emph{the tangent planes to $\,\tube\,$ at these endpoints are parallel.} 
To see that, compute the partial derivatives $\,X_{t}\,$ and $\,X_{z}\,$ at these points. Since
$\,\ov_{0}\,$ is central, we have $\,\g'(\th+\pi)=-\g'(\th)\,$, and this makes the tangent planes
parallel, since both are spanned by
\begin{equation*}
	\left(\g'(\th),\,0\right) = \pm X_{t}
	\quad\text{and}\quad
	\bigl(\cx'(b),\,1\bigr) = X_{z}\,.
\end{equation*}

Now suppose, fixing the $\th$-diameter of $\,\ov(b)$ as axis, we tilt the plane $\,z=b\,$ 
away from the horizontal with some small slope $\,\eps>0\,$ to get a new plane 
$\,P_{\eps}(\th)\,$. For sufficiently small $\,\eps>0\,$, the intersection 
$\,\ov(b,\th,\eps):=\tube\cap P_{\eps}(\th)\,$ will remain an oval---and a central oval, since 
$\,\tube\,$ has \cpo. 

Further, since $\,P_{\eps}(\th)\,$ contains the $\th$-diameter of $\,\ov(b)$, the endpoints 
$\,X(\th,b)\,$ and $\,X(\th+\pi,b)\,$ of that diameter remain on $\,\ov(b,\th,\eps)\,$ 
independently of $\,\eps\,$.  And since the tangent planes to $\,\tube\,$ at 
these points are parallel, and their intersections with $\,P_{\eps}(\th)\,$ clearly form lines tangent 
to $\,\ov(b,\th,\eps)\,$ at $\,X(\th,b)\,$ and $\,X(\th+\pi,b)\,$, \emph{those tangent lines are parallel}.

The latter fact shows that the $\th$-diameter of $\,\ov(b)\,$ \emph{remains} a diameter 
of $\,\ov(b,\th,\eps)\,$ independently of $\,\eps\,$, and hence that \emph{$\,(\cx(b),b) $ forms 
the center of $\,\ov(b,\th,\eps)\,$}, for each $\,\th\,$ and each sufficiently small $\,\eps>0\,$. 
The center of $\,\ov(b,\th,\eps)\,$ remains fixed as we vary $\,\eps\,$.
 
Now observe that every point sufficiently close to $\,\ov(b)$ on $\,\tube\,$ belongs 
$\,\ov(b,\th,\eps)\,$ for some $\,\th\,$ and some small $\,\eps>0\,$, so that by \cpo, its
reflection through $\,(\cx(b),b)\,$ also lies on $\,\tube\,$. It follows that an entire neighborhood 
of $\,\ov(b)$ in $\,\tube\,$ has reflection symmetry through $\,(\cx(b),b)\,$. 
In some neighborhood of $\,(\cx(b),b)\,$, the central curve $\,\cx\,$ of $\,\tube\,$ then inherits that
same reflection symmetry. Since $\,b\in(-1,1)\,$ was arbitrary, this clearly means that (\ref{eqn:lc}) 
holds for $\,\cx\,$, and our Linearity Criterion now straightens the central curve, as desired.
\medskip

\textbf{Surface-of-revolution case:}
Here, each horizontal plane $\,z\equiv b\,$ cuts  the original tube $\,\tube\,$
 in a \emph{circle} centered at $\,(\cx(b),b)\,$ for each $\,b\in(-1,1)\,$. 
Write $\,F(b)>0\,$ for the squared radius of this circle, and $\,(\xi(b),\eta(b)):=\cx(b)\,$
for the horizontal coordinates of its center. Then $\,\tube\,$ clearly constitutes the solution 
set of\smallskip
\begin{equation}\label{eqn:T}
	\left(x-\xi(z)\right)^{2}+\left(y-\eta(z)\right)^{2}=F(z)\,.
\end{equation}
The  $\,C^{2}\,$ differentiability of $\,\tube\,$ ensures that $\,F\,$, $\,\xi\,$ and $\,\eta\,$
are all $\,C^{2}\,$ on $\,(-1,1)\,$. 

We want to show that \cpo\ forces $\,\cx\,$ to be affine. 
To do so, we study the even and odd components of $\,\xi,\,\eta,$ and $\,F\,$ 
with respect to reflection through a point, and for that we introduce the following notation.

Suppose $\,\b\in\R\,$, and let $\,f\,$ denote any function defined on a neighborhood 
of $\,\b\,$.  We define the $\,\b$-\textbf{translate} of $\,f\,$ by
\[
	f_{\b}(t):=f(\b+t)\ .
\]
We also define the \textbf{even} and \textbf{odd} parts of $\,f_{\b}\,$ respectively as
\begin{equation*}
	f_{\b}^{+}(t) = \frac{f_{\b}(t)+f_{\b}(-t)}{2}\ ,\qquad
	f_{\b}^{-}(t) = \frac{f_{\b}(t)-f_{\b}(-t)}{2}\,.
\end{equation*}
As usual, we then have
\[
	f_{\b}^{+}(-t) = f_{\b}^{+}(t)\ ,
	\quad
	f_{\b}^{-}(-t) = -f_{\b}^{-}(t)
\]
and
\[
	f_{\b}(t) = f_{\b}^{+}(t)+f_{\b}^{-}(t)\ ,
	\quad
	f_{\b}(-t) = f_{\b}^{+}(t)-f_{\b}^{-}(t)\ .
\]
%


Now fix an arbitrary height $\,\b\in(-1,1)\,$. Since $\,\tube\,$ is horizontally circular, 
has \cpo, and lies in standard position, we can find a small slope $\,m>0\,$, 
and a $z$-intercept $\,b=b(\b)\,$ such that the plane $\,P\,$ given by\smallskip
\[
	z=mx+b\quad\text{or}\quad x=\frac{z-b}{m}\,,
\]

cuts $\,\tube\,$ is a central oval $\,\ov\,$, depending on $\,m\,$ and $\,\b\,$, and centered 
at height $\,\b\,$. In the $\,(y,z)\,$ coordinate system on $\,P\,$, we get the following equation 
for $\,\ov\,$ by restricting (\ref{eqn:T}):\smallskip
\begin{equation}\label{eqn:pxcaps}
	\left(\frac{z-b}{m}-\xi(z)\right)^{2}+ \left(y-\eta(z)\right)^{2}=F(z)\,.
\end{equation}

Solve this for $\,y\,$ in terms of the $\b$-centered variable $\,t:=z-\b\,$
to split $\,\ov\,$ into a pair of arcs, graphs of functions we shall call $\,y_{\pm}(t)\,$,
over the symmetric interval
\begin{equation}\label{eqn:Tmall}
	|t|<\sup\{z-\b\colon (x,y,z)\in\ov\}\,.
\end{equation}
Using the notation defined above, we can express these functions as\smallskip
\begin{equation}\label{eqn:Tpm}
	y_{\pm}(t) 
	:= 
	\eta_{\b}(t) 
		\pm 
		\sqrt{F_{\b}(t)-\left(\frac{\bar\b+t}{m}-\xi_{\b}(t)\right)^{2}}\,,
\end{equation}

where $\,\bar\b:=\b-b\,$. 

Since the chord joining $\,(y_{+}(0),\b)\,$ to $\,(y_{-}(0),\b)\,$ has height $\,\b\,$, 
it clearly passes through the center of $\,\ov\,$. It must therefore be a diameter. 
But the midpoint of any diameter locates the center of $\,\ov\,$, so using 
(\ref{eqn:Tpm}) to average $\,y_{\pm}(0)\,$, we can now deduce that:

\emph{The center of $\,\ov\,$ has coordinates
$\,(\eta(\b),\b)\,$ in the $\,(y,z)\,$ coordinate system on
$\,P\,$.}

This fact lets us express the central symmetry of $\,\ov\,$ as the coordinate swap
\begin{equation*}
	(\eta(\b)+s,\,\b+t)\ \longleftrightarrow\ (\eta(\b)-s,\,\b-t)\,.
\end{equation*}
When $\,t\,$ is small enough as measured by (\ref{eqn:Tmall}), 
this swap always exchanges diametrically opposed solutions of 
(\ref{eqn:pxcaps}). Write the resulting two statements in terms of the 
notation introduced above to get two simultaneous identities:\smallskip
\begin{eqnarray}\label{eqn:ypt}
	\lefteqn{F^{+}_{\b}(t)+F^{-}_{\b}(t)}\nonumber\\
	&&\quad =\
		\left(\eta(\b)+s-\eta^{+}_{\b}(t)-\eta^{-}_{\b}(t)\right)^{2}\\
	&&\qquad\qquad
		+
		\left(
			\ds{\frac{\bar\b+t}{m}}-\xi_{\b}^{+}(t)-\xi_{\b}^{-}(t)
		\right)^{2}\nonumber
\end{eqnarray}
and
\begin{eqnarray}\label{eqn:ymt}
	\lefteqn{F^{+}_{\b}(t)-F^{-}_{\b}(t)}\nonumber\\
	&&\quad =\
		\left(\eta(\b)-s-\eta^{+}_{\b}(t)+\eta^{-}_{\b}(t)\right)^{2}\\
	&&\qquad\qquad
		+
		\left(
			\ds{\frac{\bar\b-t}{m}}-\xi_{\b}^{+}(t)+\xi_{\b}^{-}(t)
		\right)^{2}\nonumber
\end{eqnarray}

Subtract (\ref{eqn:ymt}) from (\ref{eqn:ypt}), factor differences between
corresponding squares on the right, and divide by two, to obtain
\begin{eqnarray}\label{eqn:Fbm}
	\lefteqn{F^{-}_{\b}(t)}\nonumber\\
		&&\quad =\ 
		2
		\left(\eta(\b)-\eta_{\b}^{+}(t)\right)
		\left(s - \eta_{\b}^{-}(t)\right)\\
		&&\qquad\qquad 
		+\ 2
		\left(\frac{\bar\b}{m}-\xi_{\b}^{+}(t)\right)
		\left(\frac{t}{m}-\xi_{\b}^{-}(t)\right)\nonumber
\end{eqnarray}

The strict convexity of $\,\ov\,$ now guarantees that
the line $\,z=\b+t\,$ in $\,P\,$ cuts $\,\ov\,$ in two distinct points whenever  
$\,t\,$ is sufficiently small. Call the $y$-coordinates of these points 
$\,\eta(\b)+s\,$ and $\,\eta(\b)+s'\,$ respectively. Equation (\ref{eqn:Fbm})
clearly remains true if we replace $\,s\,$ by $\,s'\,$. When we subtract the resulting
$s'$-version of (\ref{eqn:Fbm}) from the $s$-version and simplify, however, we find that for all
sufficiently small $\,t\,$, we have
\begin{equation*}\label{eqn:dFbm}
	\left(s-s'\right)
	\left(\eta_{\b}^{+}(t)-\eta(\b)\right)
	= 
	0\,.
\end{equation*}
Since $\,s\,$ and $\,s'\,$ are distinct for the small $\,t\,$ in question, we evidently must  
have $\,\eta_{\b}^{+}(t)\equiv \eta(\b)\,$ for all sufficiently small $\,t\,$. By definition of
$\,\eta_{\b}^{+}\,$, this means
\[	
	\eta(\b+t)-\eta(\b) = -\bigl(\eta(\b-t)-\eta(\b)\bigr)\,,
\]
so that (\ref{eqn:lc}) holds for $\,\eta\,$. But by swapping the roles of $\,x\,$ and $\,y\,$ in the 
argument above, we find that in precisely the same way, it holds for $\,\xi\,$, and hence for 
$\,\cx = (\xi,\eta)\,$. Our Linearity Criterion then makes the $\,\cx\,$ affine, as desired.
\end{proof}

\goodbreak

\section{Main theorem}\label{sec:main}

By combining the Axis Lemma just proven with our Cylinder/Quadric Proposition \ref{prop:branch} 
and the rotationally invariant case (Proposition \ref{prop:sor}), one immediately deduces

\begin{prop}[Collar Theorem]\label{prop:collar}
	A transversely convex tube with \cpo\ is either cylindrical or quadric.
\end{prop}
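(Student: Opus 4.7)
The plan is to chain together the three main ingredients already on the table: the Axis Lemma \ref{prop:axis} identifies $\,\tube\,$ with its rectification $\,\tube^*\,$ up to affine congruence; the Cylinder/Quadric Proposition \ref{prop:branch} splits $\,\tube^*\,$ into two alternatives; and Proposition \ref{prop:sor} disposes of the rotationally symmetric branch.

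Concretely, I would begin by invoking the Axis Lemma to produce an affine isomorphism $\,A\,$ of $\,\R^3\,$ carrying $\,\tube\,$ to $\,\tube^*\,$. Next I would apply the Cylinder/Quadric Proposition directly to $\,\tube\,$, and analyze the two resulting cases for $\,\tube^*\,$. In case (i), $\,\tube^*\,$ is a cylinder over a central oval, and since affine maps carry cylinders to cylinders (the rulings get sent to a parallel family of lines), pulling back through $\,A\,$ exhibits $\,\tube\,$ itself as a cylinder.

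In case (ii), $\,\tube^*\,$ is affinely congruent, say via $\,B\,$, to a surface of revolution $\,M\,$. I would then observe that the composition $\,B\circ A\,$ carries $\,\tube\,$ onto $\,M\,$, and that \cpo\ is plainly invariant under affine maps, which preserve planes, transversality, ovals, and central symmetry. Consequently $\,M\,$ inherits \cpo\ from $\,\tube\,$, and in particular every plane sufficiently close to perpendicular to the axis of $\,M\,$ meets $\,M\,$ transversally in a central oval. This is exactly the hypothesis of Proposition \ref{prop:sor}, which then forces $\,M\,$ to be quadric; pulling back through $\,B\circ A\,$ and using the fact that affine preimages of quadrics are again quadric (the defining quadratic polynomial merely pulls back to another quadratic), I conclude that $\,\tube\,$ is quadric.

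The only point worth pausing over is the affine invariance of \cpo\ itself, which must be used in both directions---to transport the hypothesis across to $\,M\,$ and then the quadric conclusion back to $\,\tube\,$. That invariance follows immediately from the fact that affine isomorphisms send planes to planes, lines to lines, and midpoints to midpoints. Beyond that, the proof is really just a matter of stitching together the three propositions in the correct order.
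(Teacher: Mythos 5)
Your proposal is correct and follows essentially the same route as the paper, which deduces the Collar Theorem immediately by combining the Axis Lemma, the Cylinder/Quadric Proposition, and Proposition \ref{prop:sor}; you have simply made explicit the affine-invariance bookkeeping (of \cpo\ and of being quadric or cylindrical) that the paper leaves tacit.
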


We can strengthen this statement substantially, however, without much extra effort:

\begin{thm}[Main Theorem]\label{thm:main}
	A complete, connected $\,C^{2}$-immersed surface in $\,\R^{3}\,$ with  \cpo\ is 
	either a cylinder, or quadric. 
\end{thm}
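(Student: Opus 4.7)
The plan is to combine the Collar Theorem (Proposition \ref{prop:collar}) with a standard open/closed argument on $M$, using completeness of $M$ to guarantee that the propagation sweeps out the entire surface.

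First I would establish the base case. By \cpo, $M$ contains a transverse plane oval $\ov_{0}$, and an annular neighborhood of $\ov_{0}$ in $M$ embeds as a transversely convex tube $\tube_{0}$. Proposition \ref{prop:collar} places $\tube_{0}$ inside some ambient surface $\Sigma\subset\R^{3}$ that is either a cylinder over a central oval or a non-degenerate quadric. I would then propagate this containment by introducing
\[
	A := \{p\in M : \text{some open neighborhood of } p \text{ in } M \text{ is contained in } \Sigma\}.
\]
This set is open by definition and nonempty, since it contains $\tube_{0}$. By connectedness of $M$, the theorem reduces to showing $A$ is closed.

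For closedness, I would take any $p$ in the closure of $A$ and produce an open neighborhood of $p$ in $M$ that lies in $\Sigma$. Approximating $p$ by a sequence $p_{n}\in A$ and using continuity of the immersion together with the closedness of $\Sigma$ in $\R^{3}$, one gets $p\in\Sigma$; and because $M$ agrees with $\Sigma$ on a neighborhood of each $p_{n}$, the tangent planes match in the limit: $T_{p}M=T_{p}\Sigma$. Now $\Sigma$ carries transverse central oval cross-sections through each of its points, so I can choose a plane near $p$ that is transverse to both $\Sigma$ and $M$, cutting each in a $\,C^{2}\,$ oval and sharing an arc in the region where $M$ and $\Sigma$ already coincide. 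By \cpo, the resulting cross-section on $M$ is a central oval, and a tubular neighborhood of it in $M$ forms a transversely convex tube $\tube'$ containing $p$. A second application of Proposition \ref{prop:collar} makes $\tube'$ cylindrical or quadric, and since $\tube'$ agrees with $\Sigma$ on a nonempty open set, the rigidity of quadrics and central-oval cylinders under $C^{2}$-coincidence on open sets forces $\tube'\subset\Sigma$. Hence $p\in A$, so $A$ is closed, $A=M$, and completeness of $M$ then makes $M$ exhaust a connected component of $\Sigma$, proving the theorem.

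The hard part will be the closedness step, specifically producing a transversely convex tube $\tube'$ that contains $p$ and genuinely overlaps the already-known $\Sigma$-region in $M$. One has to choose the plane carefully so that the $M$-cross-section near $p$ is a bona fide $\,C^{2}\,$ oval (using $T_{p}M=T_{p}\Sigma$ to preserve transversality and strict convexity past the limit), and completeness of $M$ is exactly what ensures the full tubular neighborhood of that oval survives inside $M$ rather than running off an edge. The rigidity statement --- that two quadrics, or two cylinders over central ovals, which share an open $\,C^{2}\,$ piece must coincide on any connected component of overlap --- is classical and not the obstacle.
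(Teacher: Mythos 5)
Your overall strategy---start from the Collar Theorem and propagate---matches the paper's stated intent, but your closedness step conceals the real difficulty, and as written it does not go through. At a limit point $p$ of your set $A$, all you have is that $M$ coincides with $\Sigma$ on an open set whose closure contains $p$, plus the first-order fact $T_{p}M=T_{F(p)}\Sigma$ at the single point $p$. That is not enough to produce a plane whose intersection with $M$ is a $C^{2}$ oval through (or engulfing) $p$: outside the coincidence region the plane section of $M$ need not close up, need not stay embedded, and need not have nonvanishing curvature, and \cpo\ says nothing about a section that is not already a transverse oval---so you cannot even get started on the tube $\,\tube'$. The alternative reading, taking a transverse oval strictly inside the coincidence region near $p$ and letting ``its tubular neighborhood survive by completeness,'' also fails to reach $p$, because the guaranteed thickness of a transversely convex tube around such an oval is not uniform and can shrink to zero as the oval approaches the boundary of the coincidence region. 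This is precisely the obstruction the paper's \S\ref{sec:main} is engineered to avoid: rather than a pointwise open/closed set, it works with the maximal height interval $[A,B]$ on which the component $M_{a,b}$ is a transversely convex tube free of critical points of $F^{*}z$, observes that transverse convexity cannot be what fails at a finite endpoint (the Collar Theorem already makes the tube cylindrical or quadric, whose cross-sections are uniformly convex on compact slabs), and concludes that the only possible obstruction is a critical point of the height function, which completeness forces onto $M$ and which then pins down $F(M)$ as an ellipsoid, a paraboloid or convex hyperboloid sheet, or---when $A$ and $B$ are both infinite---a cylinder or tube hyperboloid. Your proposal has no mechanism playing this role.

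A second, smaller gap: the rigidity you call classical is false in the cylindrical case. Two cylinders over central ovals can agree on a nonempty open set without coinciding, since a central oval is not determined by an arc (modify it on an arc and on the antipodal arc symmetrically). So ``$\,\tube'$ agrees with $\Sigma$ on an open set'' does not force $\,\tube'\subset\Sigma\,$ when $\Sigma$ is a cylinder, and your set $A$ need not be closed in that case. The paper avoids this because its nested tubes $F(M_{-r,r})$ share entire cross-sectional ovals, not merely arcs, and a cylinder or quadric containing a full transverse oval of another is forced to coincide with it; for nondegenerate quadrics your unique-continuation claim is indeed unproblematic, as is the final covering-space step once $A=M$ is known.
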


\begin{proof}
	Suppose $\,F\,$ immerses a complete $\,C^{2}\,$ surface $\,M^{2}\,$ into $\,\R^{3}\,$ 
	with \cpo. The latter assumption ensures, first of all, that $\,F(M)\,$ crosses some 
	affine plane---we take it to be the $\,z=0\,$ plane---transversally (if not exclusively) along 
	a central oval $\,\ov\,$. 
	
	This being the case, define, for any two heights $\,a<0<b\,$, the open connected 	
	component 
	\begin{equation*}
		M_{a,b}\subset F^{-1}\left(\left\{(x,y,z)\in \R^{3}\colon a<z<b\right\}\right)
	\end{equation*}

	as the {unique component containing $\,F^{-1}(\ov)\,$}.
	
	Since $\,\ov\,$ is strictly convex and $\,F(M)\,$ is transverse to the plane $\,z=0\,$ 
	along $\,\ov\,$, standard arguments from basic differential topology show that for 
	$\,a<0<b\,$ sufficiently near $0\,$,
	
	\begin{itemize}
	
		\item[(i)]
		The pullback $\,F^{*}z\,$  of the height function $\,z\,$ on $\,\R^{3}\,$ 
		has no critical points in $\,M_{a,b}\,$, and
		\medskip
		
		\item[(ii)]
		$F\,$ embeds $\,M_{a,b}\,$ in $\,\R^{3}\,$ as a transversely convex tube. 
	\end{itemize}
		
	There consequently exist \emph{minimal} and \emph{maximal} heights 
	$\,-\infty\le A<0<B\le\infty\,$ such that (i) and (ii) above 
	both hold for every finite $\,a<b\,$ in the closed interval $\,[A,B]\,$.
	
	Our proof now forks in three directions, depending on whether both, neither, or
	exactly one of the endpoints $\,A\,$ and $\,B\,$ are finite.
	
	\textbf{Case $\,-\infty<A<B<\infty\,$ (Ellipsoid).} 
	In this case, by (ii), the image of $\,M_{a,b}\,$ under $\,F\,$ is 
	a transversely convex tube for every $\,a<b\,$ in the interval $\,(A,B)\,$.
	This trivially extends to $\,M_{A,B}\,$, and the resulting maximal
	tube clearly inherits \cpo\ from $\,F(M)\,$. Our Collar Theorem \ref{prop:collar} 
	then says that $\,F(M_{A,B})\,$ is either the cylinder on a central oval,
	or quadric. 
	
	We can rule out the first possibility, because on a cylinder, horizontal cross-sections are 
	uniformly convex, and the gradient of $\,z\,$ is bounded away from zero. But these facts, 
	by continuity, would extend slightly beyond $\,A\,$ and $\,B\,$, contradicting their maximality 
	with respect to (i) and (ii) above.
	
	It follows that when $\,-\infty<A<B<\infty\,$, $\,F(M_{A,B})\,$ is quadric. By affine invariance,
	however, we lose no generality by assuming that $\,F\,$ immerses $\,M_{A,B}\,$ as a quadric 
	surface of revolution around the $z$-axis: a vertical segment of an ellipsoid, cone, 
	elliptic paraboloid, or a hyperboloid. On all these surfaces, horizontal cross-sections in any 
	compact slab are uniformly convex. So the maximality of $\,A\,$ and $\,B\,$ must be dictated by 
	condition (i) above, not (ii). The completeness of $\,M\,$, then ensures that  $\,F^{*}z\,$ 
	must have critical points on both boundaries of $\,M_{A,B}\,$. But among the quadrics listed 
	above, $\,z\,$ has multiple critical points only on the ellipsoid, where it attains both a max and 
	a min. The closure of $\,F(M_{A,B})\,$ must therefore be a complete ellipsoid, which, by continuity 
	of $\,F\,$ and connectedness of $\,M\,$ must coincide with $F(M)\,$.
		
	\textbf{Case $\,-A=B = \infty\,$ (Tube hyperboloid or cylinder).}
	In this case we can immediately from the connectedness of $\,M\,$	that $\,M_{A,B}=M\,$. 
	Moreover, since (ii) holds for every finite $\,a<0<b\,$,  $\,F\,$ must embed $\,M_{-r,r}\,$ 
	in $\,\R^{3}\,$ as a transversely convex tube $\,\tube_{r}\,$ for every $\,r>0\,$. 
	As above, $\,\tube_{r}\,$ inherits \cpo\ from $\,F(M)\,$, so by the Collar Theorem \ref{prop:collar}, 
	$\,F\,$ maps $\,M_{-r,r}\,$ to a cylinder over some central oval, or to a non-degenerate quadric, 
	for each $\,r>0\,$. Let $\,S\,$ denote the unique complete unbounded cylinder or quadric that 
	extends $\,F(M_{-1,1})\,$. We then clearly have $\,F(M_{-r,r})=S\,$ in the slab $\,|z|<r\,$ for all 
	$\,r>1\,$. But then $\,S=F(M)\,$ in its entirety, for otherwise, $\,F(M)\,$ deviates from $\,S\,$ 
	at some finite height $\,\rho\,$, a contradition when $\,r>|\rho|\,$. The only smooth quadric
	that contains a horizontal oval and extends infinitely far both above and below the plane $\,z=0\,$
	is the tube hyperboloid. So in this case, $\,M\,$ is either a tube hyperboloid or a cylinder.
		
	\textbf{Cases $\,|A|<B=\infty\,$ or $\,|B|<|A|=\infty\,$ (Paraboloid or convex hyperboloid).}
	Since the reflection $\,z\to-z\,$ is affine, these two cases are equivalent. So we assume 
	$\,|A|<B=\infty\,$, and arguing as in the previous two cases, we now quickly deduce the
	existence of a quadric surface of revolution $\,S\,$ such that (modulo some fixed affine
	isomorphism)  $\,F(M_{A,b})=S\,$ for all $\,b<\infty\,$. Further, here as in the doubly-finite case, 
	the maximality of $\,A\,$ must be dictated by a critical point at height $\,A\,$. No cylinder has
	such a critical point, and among the quadrics, only the elliptic paraboloid and convex hyperboloid do.
	Clearly then, $\,S\,$ is one of these two surfaces, and $\,F(M)=S\,$.
	\end{proof}
\goodbreak

\section{Application to skew loops}\label{sec:skew}

We originally conceived our Main Theorem \ref{thm:main} above as a tool for proving the
existence of skewloops on a class of negatively curved tubes. In this final section we 
implement that idea.

\begin{definition}
	A \textbf{skewloop} is a circle differentiably immersed into $\,\R^{3}\,$ with \textbf{no}
	pair of parallel tangent lines.
\end{definition}

The existence of skewloops is not so obvious: Segre published the first construction
in 1968 \cite{segre}. A more recent construction and application appeared in M.~Ghomi's paper 
\cite{gh}, and sparked our own interest. We coined the term \emph{skewloop} in \cite{gs}, a 
subsequent joint paper that characterized \emph{positively} curved quadrics in $\,\R^{3}\,$ as the
\emph{only} surfaces having a point of positive curvature, but \textbf{no} skewloop:

\begin{thm}[{\cite[2002]{gs}}]
	A connected $\,C^{2}\,$ surface immersed in $\,\R^{3}\,$ with at least one point of 
	positive Gauss curvature admits no skewloop if and only if it is quadric.
\end{thm}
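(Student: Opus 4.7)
The plan is to prove the two directions of this biconditional separately, with Blaschke's theorem (Proposition \ref{prop:blaschke}) serving as the pivotal analytical tool for the harder direction.

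For the easier implication, that a connected quadric with a positive-curvature point admits no skewloop, I would exploit the large group of affine automorphisms acting on each relevant quadric. Up to affine equivalence, the connected positively-curved quadrics are the sphere, the elliptic paraboloid, and a single sheet of a two-sheeted hyperboloid; each admits a continuous symmetry group large enough to force any immersed loop $\Gamma$ to carry a pair of parallel tangent lines. For instance on the sphere, one combines the antipodal involution with a degree argument on the tangent-direction Gauss map $\Gamma \to \R P^{2}$; the paraboloid and hyperboloid are handled similarly using their own symmetry groups.

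For the harder implication, suppose $M$ has a point $p$ of positive Gauss curvature and contains no skewloop. Near $p$, $M$ is locally strictly convex, so every affine plane close to $T_{p}M$ and pushed slightly into $M$ cuts $M$ in a small convex $C^{2}$ oval. The crux is to show every such oval $\ov$ on $M$ is centrally symmetric, at which point Blaschke's theorem immediately yields that $M$ is locally quadric near $p$. I would prove centrality by contrapositive: \emph{a non-central plane oval $\ov$ on $M$ yields a skewloop on $M$.} The tangent-direction Gauss map $\US^{1} \to \R P^{2}$ of a plane oval is a two-to-one cover of a great circle, with the two points in each fiber being the endpoints of a $\th$-diameter of $\ov$. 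The non-constancy of the centrix $\cx(\th)$ of a non-central $\ov$ (Observation \ref{obs:centrix}) supplies at each $\th$ a transverse surface-tangent direction along which the two diameter endpoints can be displaced asymmetrically, desingularizing the double cover into an embedding and producing an embedded skewloop on $M$.

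To upgrade local quadricity to a global statement, let $Q$ be the quadric coinciding with $M$ near $p$, and let $U \subset M$ be the set of points at which $M$ locally coincides with $Q$. Then $U$ is open by definition and non-empty. Since $Q$ has positive Gauss curvature at $p$, and every smooth quadric with a single point of positive curvature is positively curved everywhere, every point of $U$ has $K > 0$. At any boundary point $q \in \partial U \cap M$, $K_M(q) > 0$ by continuity of the 2-jet, so small plane ovals on $M$ exist near $q$; the contrapositive argument above forces all of them to be central, and Blaschke then yields that $M$ is locally quadric near $q$, necessarily coinciding with $Q$ on the overlap. Thus $q$ lies in the interior of $U$, a contradiction, so $U$ is closed. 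Connectedness of $M$ then forces $U = M$, completing the proof. The main obstacle is the skewloop construction from a non-central oval: a plane oval carries a whole $\US^{1}$-family of parallel tangent pairs, and one must verify that the displacement supplied by the non-constant centrix can be orchestrated to break every pair simultaneously without creating any new ones elsewhere along the perturbed loop. This is a delicate transversality-plus-calculation step, and the essential role of the positive-curvature hypothesis is precisely to guarantee the existence of the small plane ovals to which the central/non-central dichotomy is applied.
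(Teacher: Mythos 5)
This theorem is not proved in the present paper: it is imported from \cite{gs}, and the text only records that its proof relied essentially on Blaschke's Proposition \ref{prop:blaschke}. Measured against that, your outline of the hard direction (no skewloop plus a point of positive curvature implies quadric) does track the actual strategy: local strict convexity near the positive-curvature point, centrality of the small nearly-tangent plane sections, Blaschke to get local quadricity, and an open/closed propagation, which you carry out correctly. But the pivot of that reduction --- the claim that a non-central transverse plane oval on a surface forces a skewloop --- is exactly \cite[Lemma 5.1]{gs}, quoted in this paper as Lemma \ref{lem:gs}, and you do not prove it. Your sketch (perturb the oval out of its plane, guided by the non-constant centrix, so as to break every parallel tangent pair at once without creating new ones) is the right idea, but you yourself flag it as a ``delicate transversality-plus-calculation step'' and leave it there. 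Since the whole passage from ``no skewloop'' to ``all small sections are central'' rests on this lemma, that is a genuine gap rather than a detail.

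The converse direction (a positively curved quadric admits no skewloop) is also not established by your argument. The mechanism you invoke --- a large affine symmetry group together with a degree argument on the tangent-direction map into $\,\R P^{2}\,$ --- cannot work as stated: an arbitrary immersed loop need not respect any symmetry of the ambient surface; injectivity of a loop's tangent-direction map into $\,\R P^{2}\,$ is not topologically obstructed (the projective plane contains embedded circles, so no degree count rules it out); and large symmetry groups simply do not preclude skewloops --- by Theorem \ref{thm:noskew} of this very paper, any complete non-quadric surface of revolution that meets a plane transversally along an oval carries a skewloop despite its rotational symmetry. The known proofs that positively curved quadrics (and, later, negatively curved ones, in \cite{tab} and \cite{ss}) are skewloop-free all exploit the quadratic polynomial structure itself, comparing tangent data of the loop through the bilinear form defining the quadric; some algebraic input of that kind has to replace your symmetry-plus-degree heuristic before this half of the equivalence can be considered proved.
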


In particular, this identifies ellipsoids as the only \emph{compact} surfaces lacking
skewloops in $\,\R^{3}$. Its proof made strong use of Blaschke's result (Proposition 
\ref{prop:blaschke}) which, as explained in \S\ref{sec:intro}, applies to \emph{convex} surfaces 
only, and is fundamentally local.

Our dependence on Blaschke's theorem in \cite{gs} thus compelled us to assume positive curvature,
and at that time, we could only raise the question as to whether our skewloop-free characterization 
of quadrics might extend to non-positively curved surfaces \cite[Appendix B]{gs}.

S.~Tabachnikov, however, took a significant and interesting step toward an answer in \cite{tab},
when he showed that---modulo genericity and $\,C^{2}\,$ assumptions that were later eliminated in
\cite{ss}---\emph{negatively} curved quadrics admit no skewloops. That still left the converse
question open, however: Does lack of skewloops \emph{characterize} negatively curved quadrics?

We can now affirm that within a large class of surfaces, it does. To do so, we merely
combine results of the present paper with a lemma from \cite{gs}:

\begin{lem}[{\cite[Lemma 5.1]{gs}}]\label{lem:gs}
 	Suppose a $\,C^{2}\,$ embedded surface in $\,\R^{3}\,$ contains no skewloop,
	and some affine plane cuts it transversely along an oval $\,\ov\,$. Then 
	$\,\ov\,$ is central.
\end{lem}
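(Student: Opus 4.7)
The plan is to argue by contrapositive: assuming $\,\ov\,$ is not central, I construct a skewloop on $\,S\,$ as a small $\,C^{2}\,$ perturbation of $\,\ov\,$. As a first step I parametrize a neighborhood of $\,\ov\,$ in $\,S\,$ by a tubular $\,C^{2}\,$ map $\,\Phi:\US^{1}\times(-\delta,\delta)\to S\,$ with $\,\Phi(\th,0)=\g(\th)\,$, arranging by a rescaling of the transverse coordinate $\,t\,$ that $\,v(\th):=\partial_{t}\Phi(\th,0)\,$ has $z$-component identically $\,1\,$. Transversality of $\,S\,$ to the plane containing $\,\ov\,$ keeps $\,v_{z}\,$ bounded away from $\,0\,$, so this normalization is always available. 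For small $\,C^{2}\,$ periodic $\,f\,$, I then consider the loop $\,\alpha_{f}(\th):=\Phi(\th,f(\th))\,$ lying on $\,S\,$.

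Next I derive the first-order obstruction to $\,\alpha_{f}\,$ being a skewloop. Since $\,\ov\,$ has parallel tangent pairs only at antipodal points $\,(\th,\th+\pi)\,$, for $\,f\,$ small any parallel pair of $\,\alpha_{f}\,$ must lie near the antipodal diagonal. Imposing $\,T_{f}(\th+\pi)=\lambda\,T_{f}(\th)\,$ at first order in $\,f\,$ gives $\,\lambda=-|\g'(\th+\pi)|/|\g'(\th)|\,$ from the horizontal components; matching vertical components, with the $\,f(\th)v_{z}'(\th)\,$ term killed by $\,v_{z}'\equiv 0\,$, reduces the obstruction to
\[
	u(\th) \;:=\; \frac{f'(\th)}{|\g'(\th)|}+\frac{f'(\th+\pi)}{|\g'(\th+\pi)|} \;=\; 0.
\]
Thus $\,\alpha_{f}\,$ has no parallel pair near the antipodal diagonal iff $\,u>0\,$ on $\,\US^{1}\,$, after flipping the sign of $\,f\,$ if needed. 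The identity $\,|\g'|=h+h''\,$ in (\ref{eqn:speed}) together with Observation \ref{obs:centrix} shows that $\,\ov\,$ is central iff $\,|\g'(\th)|=|\g'(\th+\pi)|\,$ for all $\,\th\,$, so non-centrality makes this equality fail on an open subset of $\,\US^{1}\,$.

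Exploiting this, I decompose $\,f=f^{+}+f^{-}\,$ into $\,\pi$-periodic and $\,\pi$-antiperiodic parts, obtaining
\[
	u(\th) = f^{+\prime}(\th)\bigl(|\g'(\th)|^{-1}+|\g'(\th+\pi)|^{-1}\bigr) + f^{-\prime}(\th)\bigl(|\g'(\th)|^{-1}-|\g'(\th+\pi)|^{-1}\bigr).
\]
Taking $\,f^{-\prime}(\th)=|\g'(\th)|^{-1}-|\g'(\th+\pi)|^{-1}\,$ (which is $\,\pi$-antiperiodic with vanishing integral, hence defines a legitimate $\,\pi$-antiperiodic $\,f^{-}\,$) makes the $\,f^{-}\,$ summand of $\,u\,$ a non-negative square, strictly positive except at the isolated zeros of $\,|\g'(\th)|-|\g'(\th+\pi)|\,$. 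A small $\,\pi$-periodic $\,f^{+\prime}\,$ of mean zero, with positive bumps at those zeros, then makes the first summand strictly positive there while remaining dominated by the second summand elsewhere, yielding $\,u>0\,$ on all of $\,\US^{1}\,$. A standard transversality argument upgrades this first-order skewness to a genuine skewloop: for $\,f\,$ of sufficiently small $\,C^{2}\,$ norm, continuity confines any hypothetical parallel tangent pair to a neighborhood of the antipodal diagonal, where the uniform bound $\,u\ge c>0\,$ rules it out. The resulting skewloop on $\,S\,$ contradicts the hypothesis, forcing $\,\ov\,$ to be central.

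The main obstacle is ensuring the pointwise positivity $\,u>0\,$ on all of $\,\US^{1}\,$: non-centrality of $\,\ov\,$ only gives that $\,|\g'(\th)|-|\g'(\th+\pi)|\,$ is not identically zero, and its isolated zeros must be simultaneously handled by a single $\,\pi$-periodic bump $\,f^{+}\,$ respecting $\,\int f^{+\prime}=0\,$. This is a delicate but concrete bump-function construction, feasible because the zeros are isolated for generic non-central $\,\ov\,$ and $\,f^{+}\,$ has ample degrees of freedom to target them.
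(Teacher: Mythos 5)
Two remarks before the substantive one: this paper does not actually prove Lemma \ref{lem:gs} --- it imports it from \cite{gs} --- so the comparison is with the argument there, and your strategy (contrapositive; perturb the oval inside the surface by a graph $\theta\mapsto\Phi(\theta,f(\theta))$; kill the only candidate parallel pairs, which sit near the antipodal diagonal of the support parametrization, by a first-order obstruction) is essentially that argument. Your reduction is correct as far as it goes: the normalization $v_{z}\equiv1$ is legitimate (use $(\theta,z)$ as coordinates near $\ov$, exactly as in \S\ref{ssec:Gamma}), the obstruction is indeed
$u(\theta)=\kappa(\theta)f'(\theta)+\kappa(\theta+\pi)f'(\theta+\pi)$ with $\kappa=1/|\gamma'|$, and centrality is equivalent to $|\gamma'(\theta)|\equiv|\gamma'(\theta+\pi)|$ because the derivative of the centrix is $\tfrac12\bigl(|\gamma'(\theta)|-|\gamma'(\theta+\pi)|\bigr)\i\,e^{\i\theta}$.

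The genuine gap is in the positivity step, and you have named it yourself but not closed it: your bump construction assumes the zero set of $d(\theta):=|\gamma'(\theta)|^{-1}-|\gamma'(\theta+\pi)|^{-1}$ consists of isolated points, and you appeal to this holding ``for generic non-central $\ov$.'' The lemma quantifies over \emph{every} non-central transverse oval, and non-centrality only gives $d\not\equiv0$; the zero set $Z=\{d=0\}$ can perfectly well contain whole arcs (choose the radius of curvature $\rho=h+h''$ to agree with $\rho(\cdot+\pi)$ on an interval but not globally, subject only to the closing condition $\int_{0}^{2\pi}\rho\,e^{\i\theta}d\theta=0$). So as written the proof does not cover the statement. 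The fix is easy and stays inside your framework: take $f^{-\prime}=M\,d$ with a large constant $M$, and choose the $\pi$-periodic part $f^{+\prime}$ (mean zero over a period) to equal $1$ on an entire neighborhood $W$ of the closed set $Z$, with its compensating negative part supported in a region where $|d|\ge\delta>0$ and bounded by some constant $C$. On $W$ the first summand alone is at least $\min_{\theta}\bigl(\kappa(\theta)+\kappa(\theta+\pi)\bigr)>0$ while $M d^{2}\ge0$; off $W$ one has $|d|\ge\eta>0$ by compactness, so $M\eta^{2}$ dominates the bounded negative contribution once $M$ is large. This yields $u\ge c>0$ for every non-central oval, no genericity needed. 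A smaller point to make explicit in the final ``upgrade'' step: besides pairs near the antipodal diagonal you must also exclude parallel tangents at \emph{distinct nearby} parameters; this follows because the tangent-direction map of $\gamma$ is an immersion of a compact curve, hence locally injective at a uniform scale, and this persists under $C^{2}$-small perturbations --- which is why $f$ must be taken small in $C^{2}$, not merely $C^{1}$.
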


Indeed, suppose $\,F:M\to\R^{3}\,$ immerses an open $\,C^{2}\,$ surface so that
it cuts some affine plane transversally along an oval $\,\ov\,$. Then $\,F\,$ clearly embeds some
annular neighborhood of $\,F^{-1}(\ov)\subset M\,$ into $\,\R^{3}\,$ as a transversely convex
tube. Such a tube either does, or does not, have \cpo, and correspondingly,
it either belongs to a central cylinder or quadric by Proposition \ref{prop:collar},
or else it contains a skewloop by Lemma \ref{lem:gs}. We have thus proven

\begin{prop}\label{prop:noskew}
	Suppose a $\,C^{2}$-immersed surface $\,M\subset\R^{3}\,$ cuts an affine plane
	transversally along an oval $\,\ov$, but admits no skewloop. Then some
	neighborhood of $\,\ov\,$ in $\,M\,$ belongs to a central cylinder or quadric.
\end{prop}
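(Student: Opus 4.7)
The plan is to reduce the claim to the Collar Theorem (Proposition \ref{prop:collar}) by a simple dichotomy on whether the local tube around $\,\ov\,$ has \cpo, and to rule out the non-\cpo\ alternative via Lemma \ref{lem:gs}.

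First I would construct, from the immersion $\,F:M\to\R^{3}\,$ and the transverse oval $\,\ov\,$, an annular neighborhood $\,A\subset M\,$ of $\,F^{-1}(\ov)\,$ on which $\,F\,$ is an embedding and whose image can be put, after an affine change of coordinates, into the standard-position form of Definition \ref{def:tct}. This is routine: transversality of $\,F\,$ to the plane of $\,\ov\,$ along the compact, strictly convex loop $\,\ov\,$ lets the implicit function theorem produce nearby horizontal cross-sections that are again embedded $\,C^{2}\,$ ovals, and tracking their centroids yields the central curve $\,\cx\,$ and support map $\,\G\,$ required. The resulting $\,\tube:=F(A)\,$ is thus a transversely convex tube that meets the plane of $\,\ov\,$ transversally along $\,\ov\,$ itself.

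Next comes the dichotomy. Either $\,\tube\,$ has \cpo\ or it does not. If it does, the Collar Theorem immediately gives that $\,\tube\,$ is either cylindrical over a central oval or quadric, so some neighborhood of $\,\ov\,$ in $\,M\,$ belongs to a central cylinder or quadric, as desired. If it does not, then some affine plane meets $\,\tube\,$ transversally along an oval that fails to be central. Since $\,\tube\,$ is embedded, the contrapositive of Lemma \ref{lem:gs} produces a skewloop on $\,\tube\,$, which pulls back under the embedding $\,F|_{A}\,$ to a skewloop on $\,A\subset M\,$, contradicting the skewloop-free hypothesis. Thus only the \cpo\ alternative can occur, and the proof is complete.

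The only point requiring genuine care is the first step: simultaneously verifying, on a suitable annular neighborhood, that the projection to the horizontal plane stays regular, each nearby cross-section stays an embedded oval, and the centroid curve is $\,C^{2}\,$. All three follow from transversality plus continuity of curvature along the compact loop $\,\ov\,$, so this is not a genuine obstacle. The substance of the argument sits entirely in Proposition \ref{prop:collar} and Lemma \ref{lem:gs}, which here are merely orchestrated.
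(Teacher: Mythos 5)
Your proposal is correct and takes essentially the same route as the paper: embed an annular neighborhood of $\,\ov\,$ as a transversely convex tube, then either the tube has \cpo\ and Proposition \ref{prop:collar} makes it cylindrical or quadric, or it fails \cpo\ and (the contrapositive of) Lemma \ref{lem:gs} produces a skewloop, contradicting the hypothesis. The added detail you give on constructing the tube is precisely the step the paper treats as routine, so there is nothing further to flag.
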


If we assume completeness, we get a more elegant global statement:

\begin{thm}\label{thm:noskew}
	Suppose a $C^{2}$-immersed surface $\,M\subset\R^{3}\,$ crosses some plane transversally 
	along an oval. Then exactly one of the following holds:	
	\begin{itemize}
		
		\item[(i)] $S\,$ contains a skewloop.
		\smallskip
		
		\item[(ii)] $S\,$ is the cylinder over an oval.
		\smallskip
		
		\item[(iii)] $S\,$ is a non-cylindrical quadric.
	\end{itemize} 
\end{thm}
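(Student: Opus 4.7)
The plan is to reduce the theorem directly to the Main Theorem \ref{thm:main} via a single application of Lemma \ref{lem:gs}. The three listed cases are meant to cover $M$ exhaustively and disjointly, and the real content concerns the skewloop-free scenario. If $M$ already contains a skewloop, case (i) holds and there is nothing more to do, so I may henceforth assume $M$ admits no skewloop. In that case Lemma \ref{lem:gs} tells me that \emph{every} transverse plane oval on $M$ is central; combined with the hypothesized existence of at least one such oval, this is precisely the \cpo\ of Definition \ref{def:cpo}.

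Applying the Main Theorem \ref{thm:main}---using the completeness assumption that the sentence preceding the theorem statement explicitly puts in force---I then conclude that $M$ is either a cylinder over a central oval or a quadric. A cylinder over a central oval, whether or not it happens to also be quadric (as in the elliptic cylinder), places $M$ in case (ii), while any other quadric places $M$ in case (iii). This settles the ``at least one'' half of the conclusion.

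What remains is the ``at most one'' assertion, which reduces to showing that cases (i) and (ii), and cases (i) and (iii), cannot simultaneously hold; cases (ii) and (iii) exclude each other by the qualifier ``non-cylindrical''. For the quadric side I would cite the combination of the Ghomi--Solomon theorem \cite{gs}, which handles the quadrics containing a point of positive curvature, with Tabachnikov's result \cite{tab} and its sharpening in \cite{ss}, which together handle the remaining negatively curved quadrics (the one-sheeted hyperboloid and the hyperbolic paraboloid). For the cylindrical side I would invoke the analogous known fact that a cylinder over a central oval contains no skewloop, either by appeal to the literature or by a direct argument exploiting the translational symmetry of the cylinder along its rulings together with the centrality of the base oval.

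I anticipate no real obstacle in carrying this out: all the hard analytic and geometric work has already been absorbed into the Main Theorem \ref{thm:main} and into Lemma \ref{lem:gs}, and Theorem \ref{thm:noskew} is essentially a repackaging of those results together with the cited no-skewloop facts for cylinders and quadrics. The mildest subtlety is the bookkeeping around elliptic cylinders, which must be sorted into case (ii) rather than case (iii) to keep the alternatives mutually exclusive.
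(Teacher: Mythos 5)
Your proposal matches the paper's own proof: the paper likewise argues that if $\,M\,$ lacks a skewloop, Lemma \ref{lem:gs} forces every transverse plane oval to be central, so $\,M\,$ has \cpo, and the Main Theorem \ref{thm:main} then yields the cylinder/quadric alternatives. The only difference is that you explicitly address the ``exactly one'' (mutual exclusivity) claim via \cite{gs}, \cite{tab}, \cite{ss} and the no-skewloop property of central cylinders, whereas the paper leaves that part implicit in its earlier discussion of those results.
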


\begin{proof}
	Our hypotheses explicitly guarantee the existence of at least one oval $\,\ov\,$
	along which $\,M\,$ cuts an affine plane transversally. But they actually ensure that \emph{all} 
	such ovals are central. For otherwise, Lemma \ref{lem:gs} puts a skewloop on $\,M\,$. 
	It follows that $\,M\,$ has \cpo, and the desired conclusion
	then follows from our Main Theorem \ref{thm:main}
\end{proof}

\begin{cor}
	Every complete embedded negatively curved surface that meets a plane transversely
	along an oval admits a skewloop, \emph{unless} it is affinely congruent to the tube 
	hyperboloid $\,x^{2}+y^{2}-z^{2}=1\,$.
\end{cor}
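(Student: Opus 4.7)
The plan is to apply Theorem~\ref{thm:noskew} directly and eliminate cases. Let $M$ be a complete embedded negatively curved surface that meets some plane transversely along an oval, and suppose, aiming for the stated dichotomy, that $M$ admits no skewloop. Then Theorem~\ref{thm:noskew} forces $M$ to fall into case (ii) or case (iii) of that result.

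Case (ii), a cylinder over an oval, gives $M$ a straight-line generator through every point, along which the Gauss curvature vanishes. This contradicts the strict negativity of $K$ and is discarded in one line.

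For case (iii), I would invoke the standard affine classification of smooth non-cylindrical quadric surfaces in $\R^{3}$: ellipsoid, elliptic paraboloid, two-sheeted hyperboloid, hyperbolic paraboloid, and one-sheeted hyperboloid, with the cone singular at its apex and hence excluded as a smooth surface. The first three are everywhere locally convex with $K>0$, and are ruled out by the negative curvature hypothesis. The hyperbolic paraboloid must be excluded on a different basis, namely that it supports no bounded planar cross-sections: projectivizing $z=x^{2}-y^{2}$ as $ZW=X^{2}-Y^{2}$, one checks that its closure meets the plane at infinity $W=0$ in the two real lines $X=\pm Y$, so every affine plane meets it in a set whose projective completion contains at least one of those points --- forcing unboundedness. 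Hence the hyperbolic paraboloid admits no planar oval cross-section and is incompatible with the transverse-oval hypothesis on $M$.

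The only remaining possibility is the one-sheeted hyperboloid, and every such surface is affinely congruent to the tube hyperboloid $x^{2}+y^{2}-z^{2}=1$ via diagonalization and rescaling of its defining quadratic form. The only modestly technical moment in the whole argument is the projective exclusion of the hyperbolic paraboloid; everything else is immediate from Theorem~\ref{thm:noskew} and the standard classification of quadrics.
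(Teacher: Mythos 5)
Your proposal is correct and follows essentially the same route as the paper: invoke Theorem~\ref{thm:noskew} and then observe that, among cylinders and quadrics admitting a compact planar cross-section, only the tube hyperboloid is negatively curved. Your explicit case analysis (flat rulings on cylinders, positive curvature of the ellipsoid/elliptic paraboloid/two-sheeted hyperboloid, and the projective exclusion of the hyperbolic paraboloid for lack of any oval section) is simply the expanded content of the one-line argument the paper gives.
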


\begin{proof}
	This follows immediately from Theorem \ref{thm:noskew}, for among all cylinders
	and quadrics having a compact cross-section, only the tube hyperboloid has negative curvature.
\end{proof}



\section*{Acknowledgments} Many thanks to the Technion---Israel Institute of Technology---for their 
hospitality during a sabbatical in which much of this work got done, and to the Lady Davis Foundation 
and Indiana University for the financial support that made our visit there possible.



\vfill

\end{document}